\newtheorem{theorem}{\rm\bf Theorem}[section]
\newtheorem{proposition}[theorem]{\rm\bf Proposition}
\newtheorem{corollary}[theorem]{\rm\bf Corollary}
\theoremstyle{definition}
\newtheorem{definition}[theorem]{\rm\bf Definition}
\theoremstyle{remark}
\newtheorem{remark}[theorem]{\rm\bf Remark}
\newtheorem{example}[theorem]{\rm\bf Example}
\def\scal#1#2{\langle #1, #2\rangle}
\def\R#1{\mathbb{R}^{#1}}
\def\half#1#2{\begin{matrix}\frac{#1}{#2}\end{matrix}}
\def\Iso{\mathscr{I}}
\def\scal#1#2{\langle #1, #2\rangle}
\DeclareMathOperator{\re}{\mathrm{Re}}
\DeclareMathOperator{\trace}{\mathrm{tr}}
\begin{document}

\title[New  explicit solutions to the $p$-Laplace equation]{New  explicit solutions to the $p$-Laplace equation based on isoparametric foliations}

\author{Vladimir G. Tkachev}
\thanks{Supported by G.S. Magnuson's Foundation, grant MG 2017-0101}
\email{vladimir.tkatjev@liu.se}
\address{Link\"oping University, Department of Mathematics, SE-581 83 }


\begin{abstract}
In contrast to an infinite family  of explicit examples of two-dimensional $p$-harmonic functions obtained by G.~Aronsson in the late 80s, there is very little known about the higher-dimensional case. In this paper,  we show how to use isoparametric polynomials to produce diverse examples of $p$-harmonic and biharmonic functions. Remarkably, for some distinguished values of $p$ and the ambient dimension $n$ this yields first examples of rational and algebraic $p$-harmonic functions. Moreover, we show that there are no $p$-harmonic polynomials of the isoparametric type. This  supports a negative answer to a question proposed in 1980 by J.~ Lewis.
\end{abstract}

\keywords{
Isoparametric polynomials; $p$-Laplacian; $\infty$-Laplacian;  biharmonic functions; minimal submanifold; focal varieties; rational solutions}

\maketitle

\section{Introduction}

A foliation of a Riemannian submanifold is called isoparametric if its regular leaves have constant mean curvature. The study and classification of  isoparametric foliations is an important problem of geometry. Remarkably, isoparametric foliations  have been shown to be useful in constructing explicit examples in many problems of analysis, geometry and algebra. We briefly mention their appearance in entire solutions to the minimal surface equation \cite{BGG}, \cite{Simon89}, \cite{SS}, exotic smooth structures \cite{GeJ2016}, Yau conjecture on the first eigenvalue \cite{TangYan13}, nonassociative algebras \cite{Karcher86}, \cite{Tk10a}, \cite{Tk14}, eigenmaps between spheres and Brouwer degrees of gradient maps \cite{Tang2}, \cite{Tang3}, \cite{GeXie}, viscosity solutions to fully nonlinear elliptic PDEs \cite{NTV}, \cite{NTVbook}, Willmore submanifolds \cite{Xie}, integrable structures and mathematical physics \cite{Savo}, \cite{Ferap}.

In this paper, we give yet another application of isoparametric foliations to constructing explicit  examples  of $p$-harmonic and biharmonic functions. We employ an elementary approach  and it is also worth noting that many results of the paper may be easily extended to diverse variational type PDEs. Many of the constructed  examples are either rational or algebraic functions of coordinates.

The existence and construction of homogenous (sometimes called quasiradial or separable) $p$-harmonic functions of the form $|x|^\beta\omega(\theta)$, $\theta=x/|x|$ is well-known and exploited extensively, for example, by L.~V\'{e}ron and S.~Kichenassamy \cite{Veron}, \cite{KSVeron}:   the orthogonal decomposition of the $p$-Laplace operator yields the characteristic equation for $\beta$ and a homogenous first order differential equation for $\omega(\theta)$ on the unit sphere $S^{n-1}\subset\R{n}$. In the two-dimensional case $n=2$, the equation can be solved explicitly in terms of trigonometric functions \cite{Krol73}, \cite{Aronsson86}, \cite{Tk06c}. Some explicit examples of $p$-harmonic functions are also available in \cite{Lind06}, \cite{Lind16a} and for $p=N$ \cite{BorVer07}.

Our approach is somewhat complementary to the aforementioned techniques and relies on the Cartan-M\"unzner equation: we do not work with the orthogonal decomposition of the $p$-Laplace operator, instead we reduce the original equation to a certain second order quasilinear equation, eq. \eqref{plap2} below. One of the benefits of our approach is that it allows to consider a wider (than homogenous functions) class of solutions. On the other hand, due to specific properties of isoparametric polynomials, our construction works only for specific combinations of the parameter $p$ and the ambient  dimension $n$. It would be interesting  to clarify whether these particular combinations are indeed distinguished in an appropriate sense, or they are just a consequence of our method.

 Even in the homogeneous (quasiradial) case, all explicitly known so far higher-dimensional examples correspond to the lowest values of the isoparametric parameter $m=0$ and  $m=1$, see the next section for more details. We extend this on all possible values of the isoparametric parameter $m\in \{1,2,3,4,6\}$.

The paper is organized as follows. In section~\ref{sec54} we discuss the definition and provide some principal examples of isoparametric polynomials. In section ~\ref{sec:plap} we show that the $p$-Laplace equation for a function $f(s,t)$ depending on the distance function $s=|x|^m$ and an isoparametric polynomial $t=\phi(x)$ becomes a quasilinear second order PDE in $s$ and $t$. While classification of solutions to the resulting equation  \eqref{plap2} appears formidably difficult in general, we can report here a complete solution of the problem in some important particular cases, see section~\ref{sec:spec}. We also classify homogeneous solutions of \eqref{plap2}; this can be interpreted as a direct generalization of the quasiradial solutions. In section~\ref{sec:pol}, we prove that there are no  polynomial solutions of isoparametric type.  This  supports a negative answer to a question proposed by J.~ Lewis \cite{Lewis80} on the existence of $p$-harmonic polynomials. Finally, we outline in section~\ref{sec:some} some further applications of our method  to construct  $p$-Laplacian eigenfunctions of the unit sphere and biharmonic functions.

\section{Preliminary facts on isoparametric functions}\label{sec54}
In this section  we recall  some basic concepts and facts on isoparametric functions and isoparametric hypersurfaces, see for example a recent book \cite{CecilRyan2015}. Let $M$ be a Riemannian manifold, $E\subset M$ be an open subset. A smooth function $u : E \to \R{}$ is called \textit{isoparametric} if there exist smooth functions $f$ and $g$ defined on the range of $f$ such that:
$$
|Du|^2=f(u), \quad \Delta u=g(u).
$$
Recall that a (smooth) hypersurface is called {isoparametric} if it is a regular level set of an isoparametric function.

The most interesting case is that of isoparametric hypersurfaces in the real space forms. Then it is well-known that an isoparametric hypersuraface has constant principal curvatures and conversely, any hypersuraface in a space form having constant principal curvatures is a leaf in an isoparametric foliation.
Isoparametric hypersurfaces in Euclidean space $M=\R{n}$ were classified  by Levi-Civita \cite{LevCivita} for $n=3$ and Segre \cite{Segre} for all $n$. At the same time, \`{E}.~Cartan solved the problem in the case of the hyperbolic space. In both cases the number $m$ of distinct principal curvatures is at most 2, and the hypersurfaces are essentially  tubes over a totally geodesic subspace.

The sphere the situation is much more interesting and the problem to classify isoparametric hypersurfaces in the Euclidean spheres $\mathbb{S}^{n-1}\subset \R{n}$ has very deep connections with nonassociative division algebras \cite{Karcher86} and commutative algebra \cite{CCC}, \cite{ChiBook}. Cartan itself classified all  isoparametric hypersurfaces in the Euclidean spheres with $m\le 3$ distinct principal curvatures. According to a celebrated result of F.~M\"unzner \cite{Mun1}, the number $m$ of distinct principal curvatures of an isoparametric hypersurface in a unit sphere can only be $m=1,2,3,4$ or $6$, and any such a hypersurface is the restriction on the unit sphere of a level set $M=\phi^{-1}(t)\cap \mathbb{S}^{n-1}$ ($t\in [-1,1]$) of a \textit{homogeneous polynomial solution} of
\begin{align}\label{Muntzer1}
 |\nabla \phi(x)|^2&=m^2|x|^{2m-2},\qquad x\in \R{n}
 \\ \label{Muntzer2}
 \Delta \phi(x)&=\frac{1}{2}(m_2-m_1)\,m^2|x|^{m-2},\qquad x\in \R{n}.
 \end{align}
where $(m_1,m_2)$ are the multiplicities of distinct principal curvatures related to the ambient dimension $n$ and the multiplicity number $m$ by
  \begin{equation}\label{obstr}
   n=\frac{1}{2}(m_2+m_1)m+2.
  \end{equation}
We emphasize that $m_1$ and $m_2$ are positive integers. As the matter of convention we always assume that $m_1\le m_2$. Notice that $\phi$ is harmonic if and only if $m_1=m_2$.

\begin{definition}\label{def1}
A homogeneous polynomial $\phi$ is said to be \textit{isoparametric} if it satisfies (\ref{Muntzer1})--(\ref{Muntzer2}). We write
$$
\phi\in \mathscr{I}_m(m_1,m_2)
$$ if $\phi$ satisfies both (\ref{Muntzer1}) and (\ref{Muntzer2}), and $\phi\in \mathscr{I}_m,$ if rather $\phi$ satisfies  (\ref{Muntzer1}).
\end{definition}

Notice that a homogeneous polynomial solution to the first Cartan-M\"unzner equation (\ref{Muntzer1}) alone is a composition of some isoparametric form with a Chebyshov polynomial \cite{Tk14b}.

Below we consider some explicit representations of isoparametric polynomials. Let us make some remarks concerning notations. By $\phi_{m,m_1,m_2}$ we denote an isoparametric polynomial of degree $m$ and having the multiplicities $(m_1,m_2)$. Note that this notation may be ambiguous in certain cases when $m=4$ and $m_1\ne m_2$.  In the examples below, we give corresponding explicit representations in some specific Euclidean coordinates providing a most optimal form. The reader have to note, however, that an isoparametric polynomial is determined up to an orthogonal transformation of the ambient space.

The case $m=1$ is trivial: any linear function of $x\in \R{n}$ is an isoparametric polynomial.
It is also straightforward to verify that for $m=2$ the only possible  isoparametric quadratic forms are
\begin{equation}\label{m2}
\phi=x_1^2+\ldots+x_{m_2}^2+x_{m_2+1}^2 -x_{m_2+2}^2-\ldots-x_{m_1+m_2+2}^2, \quad m_1, m_2\in \mathbb{Z}^{+}.
\end{equation}
If $m_1=m_2$ then the above function  is harmonic and the zero level set $\phi(x)=0$ is a Clifford-Simon minimal cone.

By the classical Cartan result \cite{Cartan38},  there are exactly four (up to an isometry) isoparametric polynomials for $m=3$. Furthermore, they are \emph{harmonic} and the multiplicities of the principal curvatures coincides with the dimensions of the classical division algebras:   $m_1=m_2\in \{1,2,4,8\}$. More precisely,  the corresponding  cubic forms in dimensions $n=5,8,14$ and $26$  are given by
\begin{equation}\label{CartanFormula}
\begin{split}
\phi(x)=&x_{n}^3+\frac{3}{2}x_{n}(|z_1|^2+|z_2|^2-2|z_3|^2-2x_{n-1}^2)+ \frac{3\sqrt{3}}{2}x_{n-1}(|z_1|^2\\
&-|z_2|^2)+{3\sqrt{3}}\re z_1(z_2z_3)\qquad\qquad d=1,2,4,8,
\end{split}
\end{equation}
where $z_k=(x_{kd-d+1},\ldots,x_{kd})\in \R{d}\cong\mathbb{A}_d$, $k=1,2,3$, and  $\mathbb{A}_d$ denotes the real division algebra of dimension $d$: $\mathbb{A}_1=\mathbb{R}$ (the reals), $\mathbb{A}_2=\mathbb{C}$ (the complexes), $\mathbb{A}_4=\mathbb{H}$ (the quaternions) and $\mathbb{A}_8=\mathbb{O}$ (the octonions). In fact, one can rewrite (\ref{CartanFormula}) in a more compact way as follows
\begin{equation}\label{Iso3det}
\phi(x)=-\frac{\sqrt{3}}{2}\trace T^3,
\quad\quad
T=\left(
            \begin{array}{ccc}
              \half{1}{\sqrt{3}}x_{n}+x_{n-1} & z_3 & \bar z_2 \\
              \bar z_3& \half{1}{\sqrt{3}}x_{n}-x_{n-1} & x_1 \\
              z_2 & \bar z_1 & \half{-2}{\sqrt{3}}x_{n} \\
            \end{array}
          \right),
\end{equation}
where the trace representation for $d=4,8$ should be understood in the  Jordan algebra sense \cite{BS2}, \cite{Tk14}.

When  $m=4$, the situation is much more involved. The results of R.~Takagi \cite{Takagi}, Ozeki and Takeuchi \cite{OT1}, \cite{OT2}, S.~Stolz \cite{Stoltz}, Cecil, Chi and Jensen \cite{CCC} and Q.-S.~ Chi \cite{ChiBook} establish an ultimate classification of isoparametric hypersurfaces with four principal curvatures. In summary, any isoparametric hypersurface with $m=4$ is either from the Ferus-Karcher-M\"unzner family \cite{FKM} based on the representations of Clifford algebras (see \eqref{FKM} below), or homogeneous (i.e. the orbits of certain subgroups
of the orthogonal group $O(n)$) with $(m_1,m_2)=(2,2)$ or $(4,5)$ in $\R{10}$ and $\R{20}$, respectively. \'{E}.~Cartan \cite{Cartan40} was the first to classify all such isoparametric hypersurfaces. He established that $m_1=m_2\in \{1,2\}$. The corresponding explicit representations in $\R{6}$ and $\R{10}$ are given respectively by
\begin{equation}\label{m411}
\phi=|x|^4+|y|^4+8\scal{x}{y}^2-6|x|^2|y|^2,
\end{equation}
where $x=(x_1,x_2,x_3)$, $y=(x_4,x_5,x_6)$, and
\begin{equation}\label{m422}
\phi=\frac{1}{2}(\trace X^4-\frac{3}{8}(\trace X^2)^2),
\quad X=\left(
    \begin{array}{rrrrr}
      0 & x_1 & x_2 & x_3 & x_4 \\
      -x_1 & 0 & x_5 & x_6 & x_7 \\
      -x_2 & -x_5 & 0 & x_8 & x_9 \\
      -x_3 & -x_6 & -x_8 & 0 & x_{10} \\
      -x_4 & -x_7 & -x_9 & -x_{10} & 0 \\
    \end{array}
  \right),
\end{equation}
see \cite{OT1}, \cite{OT2}. The first example is a member of the so-called FKM-isoparamteric (or the Ferus-Karcher-M\"unzner) polynomials parameterized by symmetric Clifford systems as follows. A finite set $\mathcal{A}=\{A_i\}_{1\le i\le q}$ of real symmetric $2p\times 2p$-matrices  $A_i$ is called a {\textit{symmetric Clifford system}} if
\begin{equation}\label{Apolar1}
A_iA_j+A_jA_i=2\delta_{ij}I,
\end{equation}
where $I$ is the unit matrix. The simplest example is for $q=2$, $p=1$:
$$
A_1=\left(
        \begin{array}{cc}
         1 & 0 \\
          0 & -1\\
        \end{array}
      \right),
      \quad
A_2=\left(
        \begin{array}{cc}
         0 & 1 \\
          1 & 0\\
        \end{array}
      \right).
$$
A symmetric Clifford system of size $q$ in $\R{2p}$ exists if and only if the inequality
\begin{equation}\label{qp}
q\le 1+\rho(p)
\end{equation}
holds,
where  the Hurwitz-Radon function $\rho$ is defined by
\begin{equation*}\label{foll}
\rho(s)=8a+2^b, \qquad \text{if} \;\,\,s=2^{4a+b}\cdot \mathrm{odd} , \;\; 0\leq b\le 3,
\end{equation*}
see \cite{Shapiro}, \cite{CecilRyan2015}. Then the quartic form
\begin{equation}\label{FKM}
\phi_{\mathcal{A}}=|x|^4-2\sum_{i=1}^q(x^tA_ix)^2
\end{equation}
is an isoparametric polynomial in $\R{2p}$,  see \cite{CecilRyan2015} for more details. Then the isoparametric parameters (multiplicities of the principal curvatures) are given by
\begin{equation}\label{m1m2m}
m_1=q-1, \qquad m_2=p-q.
\end{equation}
Note also a nice relation between FKM-isoparamteric polynomials and cubic minimal cones arising from Clifford algebras, see \cite{Tk10c}:  the zero level set of the cubic form
\begin{equation}\label{FKM4}
f(z)=\sum_{i=1}^q (x^tA_ix)y_i, \qquad z=(x,y)\in \R{2p}\times \R{q},
\end{equation}
is a minimal cone in $\R{2p+q}$.

Finally, if $m=6$ then by the results of Abresh \cite{Abresh} we know that $m_1=m_2\in\{1,2\}$, and then Dorfmeister and Neher \cite{DorfN}  and Miyaoka \cite{Miyaoka13} have proved that there exist only two isoparametric forms of degree $6$ in dimensions $8$ and $14$, respectively and the corresponding isoparametric hypersurfaces are homogeneous (see also \cite{Siffert3} which simplifies the Dorfmeister and Neher's result and refines the Miyaoka's argument). Explicit matrix representations can be found  in \cite{PengHou}.

\begin{table}[h]
  \centering
  \begin{tabular}{c|l|l}
  $m$\quad  &\quad  $m_1$ &\quad  $n=mm_1+2$\\\hline
  $2$ \quad &\quad   $1,2,3,\ldots$ &\quad $2m_1+2$ \\\hline
  $3$ \quad &\quad   $1,2,4,8$ &\quad $3m_1+2$\\\hline
  $4$ \quad &\quad   $1,2$ &\quad $4m_1+2$\\\hline
  $6$ \quad &\quad   $1,2$ &\quad  $6m_1+2$\\
\end{tabular}
\caption{The possible multiplicities $m_1=m_2$ and the ambient dimension $n$}\label{tab1}
\end{table}

\section{The isoparametric  Ansats}\label{sec:plap}

It is the well-known and frequently exploited fact that radial symmetric solutions encode the most fundamental properties of many variational type PDEs, in particular those invariant under the action of the orthogonal group $O(n)$.  It is the also a particular aim of the present paper  to extend this observation onto a  subclass of solutions based on isoparametric polynomials. As the main model example, we consider the $p$-Laplace operator including the limit case $p=\infty$.
The method used below, however, applies also to general $O(n)$-equivariant operators, such as the mean curvature equation or a general Hessian equation.

Given $p\in \R{}$, the operator
\begin{equation}\label{plaplace}
\Delta_p u:=|\nabla  u|^2\Delta u+\half{p-2}{2}\nabla u\cdot \nabla |\nabla u|^2=0
\end{equation}
is called the $p$-Laplacian. Here $u(x)$ is a function defined on a domain $E\subset \R{n}$, $\nabla u$ is its gradient and $\cdot$ denotes the standard inner product in $\R{n}$. A classical $C^2$-solution of \eqref{plaplace} is called a $p$-harmonic function.
In general, when  $p>1$ and $p\ne2$, $u$ should be understood as a weak (in the distributional sense) solution to (\ref{plaplace}). Then $u$ is normally in the class $C^{1,\alpha}(E)$ \cite{Ural68}, \cite{Uhlenbeck}, \cite{Evans82}, but need not to be a H\"older continuous or even continuous in a closed domain with nonregular boundary \cite{KrolMaz72}. On the other hand, if $u(x)$ is a weak solution of (\ref{plaplace}) such that  $\mathrm{ess} \sup |\nabla u(x)|>0$ holds locally in a domain $E\subset \R{n}$ then $u(x)$ is in fact a real analytic function in $E$ \cite{Lewis77}.

The natural question to ask is then:  If there exist (nontrivial) \textit{rational} or \textit{algebraic} examples of $p$-harmonic functions for $p\ne2$ (including the case $p=\infty$)? In $\R{2}$ the answer is `yes' as it follows from the quasiradial examples  constructed by G.~Aronsson in \cite{Aronsson86}, \cite{Aronsson89} and its further analysis given by the author in \cite{Tk06c}. In this section (see also Remark~\ref{rem:rat}) we present several rational and algebraic examples of $p$-harmonic function.

A very related but much more difficult question is whether there exist nontrivial \textit{homogeneous polynomial} $p$-harmonic functions for $p\ne2$? This problem naturally emerges in connection to the nonvanishing propery of analytic $p$-harmonic functions and  was proposed and studied by J.~Lewis in ~\cite{Lewis80}. In particular, Lewis wa able to establish the negative answer for $n=2$. Very recently some higher-dimensional generalizations for degree $=3,4,5$ were obtained in \cite{Tk16pLapl}, \cite{Lewis2016}. We refer to section~\ref{sec:pol} below for some particular results in this direction.

The radial symmetric $p$-harmonic functions are well-known:
\begin{equation}\label{fundam}
E_{p,n}(x)=
\left\{
\begin{array}{cc}
  (n-p)|x|^{\frac{p-n}{p-1}} & \text{for $p\ne n$} \\
  -\log |x| & \text{for $p= n$}
\end{array}
\right.
\end{equation}
and play the role of the fundamental solution to (\ref{plaplace}). Notice that $E_{p,n}(x)$ is of class $W^{1,p}(\Omega)$ for any domain omitting origin.

In this paper, we study solutions $u(x)$ of \eqref{plaplace} having a general form
\begin{equation}\label{uharm}
u(x)=f(|x|^m,\,\phi(x))
\end{equation}
where $f=f(s,t)$ is a $C^2$-function and $\phi\in \Iso_m(m_1,m_2)$.

\begin{remark}
It seems natural to relax the M\"unzner-Cartan equations (\ref{Muntzer1})--(\ref{Muntzer2}) on $\phi$ and consider instead an  \emph{a priori} bigger set of $p$-harmonic functions of the form (\ref{uharm}) with $\phi$ satisfying the first M\"unzner-Cartan equation (\ref{Muntzer1}) only. Note, however,  that this does not yield some further examples. Indeed, it follows from \cite{Tk14b} that there exists a Chebyshov polynomial $T_k(t)$ and $\psi\in \Iso_{m'}(m'_1,m'_2)$ such that $\phi(x)=|x|^{m'} T_k\bigl(\psi(x)|x|^{-m'}\bigr)$, which implies that  $f(\phi,|x|^m)=F(\psi,|x|^{m'})$ for some suitable $F$ depending on $f$.
\end{remark}

\begin{remark}\label{rem:angle}
Functions given by \eqref{uharm} have a clear geometric interpretation: it is known \cite[Sec.~3.5]{CecilRyan2015} that if $\phi(x)\in \Iso_m$ then there exists a function $\theta(x)$ smooth on $S^{n-1}\setminus M(\phi)$ and $\theta(x):S^{n-1}\to [-1,1]$, such that
\begin{equation}\label{anglem}
\phi(x)=|x|^m\cos m\theta(x)
\end{equation}
In this setup, $\theta(x)$ is naturally understood as a certain (isoparametric) angle function on the unit sphere. Thus, \eqref{uharm} expresses the fact that $u$ depends only the radius $r$ and the `isoparametric angle' $\theta$. In the trivial case $n=2$, we have the standard polar angle $\theta=\arctan\frac{y}{x}$ which amounts to the quasiradial solutions studied by Aronsson \cite{Aronsson86}.
\end{remark}

\begin{proposition}\label{prop1}
Let $\phi\in \Iso_m(m_1,m_2)$ and
\begin{equation}\label{st}
s=|x|^m, \quad t=\phi(x).
\end{equation}
 Then for any $C^2$-function $f=f(s,t)$
\begin{equation}\label{plap1}
\Delta_p f(|x|^m,\,\phi(x)) =m^{4}s^{1-4/m}(sAC+\half{p-2}{2}sB+\half{(m-2)(p-2)}{2m}Ah),
\end{equation}
where
\begin{equation}\label{notation}
\begin{split}
   h&=f_ss+f_tt,  \\
   A&= (f_s^2+f_t^2)s+2f_sf_tt,\\
   B&= (f_sA_s+f_tA_t)s+(f_sA_t+f_tA_s)t,\\
   C&= (f_{ss}+f_{tt})s+2f_{st}t+(\mu+1)f_s+\nu f_t,\\
   \mu&=\half1{2}(m_1+m_2),\\
\nu&=\half1{2}(m_2-m_1),\\
\end{split}
\end{equation}
In particular, $u(x)=f(|x|^m,\,\phi(x))$ is $p$-harmonic if and only if
\begin{equation}\label{plap2}
sAC+\frac{p-2}{2}sB+\frac{(m-2)(p-2)}{2m}Ah=0.
\end{equation}
\end{proposition}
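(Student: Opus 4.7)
The plan is a direct, if somewhat lengthy, chain-rule calculation in which the Cartan–M\"unzner identities \eqref{Muntzer1}--\eqref{Muntzer2} and Euler's relation for homogeneous functions are used to collapse everything to polynomials in $s$ and $t$ (and partial derivatives of $f$).

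First I would record the basic building blocks obtained by differentiating $s=|x|^m$ and $t=\phi(x)$. Since $\phi$ is homogeneous of degree $m$, Euler's identity gives $x\cdot\nabla\phi=mt$, and together with \eqref{Muntzer1}--\eqref{Muntzer2} and the dimension formula \eqref{obstr} this yields
\begin{equation*}
|\nabla s|^2=|\nabla t|^2=m^2 s^{2-2/m},\qquad \nabla s\cdot\nabla t=m^2 s^{1-2/m}\,t,
\end{equation*}
\begin{equation*}
\Delta s=m(m+n-2)\,s^{1-2/m}=m^2(\mu+1)\,s^{1-2/m},\qquad \Delta t=m^2\nu\,s^{1-2/m}.
\end{equation*}

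Next, using $\nabla u=f_s\nabla s+f_t\nabla t$ and the three inner products above, a short expansion gives
\begin{equation*}
|\nabla u|^2=m^2 s^{1-2/m}\,A,
\end{equation*}
with $A$ as defined in \eqref{notation}. Similarly, writing out $\Delta u=f_{ss}|\nabla s|^2+2f_{st}(\nabla s\cdot\nabla t)+f_{tt}|\nabla t|^2+f_s\Delta s+f_t\Delta t$ and substituting the identities above produces
\begin{equation*}
\Delta u=m^2 s^{1-2/m}\,C,
\end{equation*}
with $C$ as in \eqref{notation}. Multiplying these two expressions immediately gives the first term $|\nabla u|^2\Delta u=m^4 s^{2-4/m}\,AC$, which carries the factor $sAC$ that appears in \eqref{plap2} after pulling out $m^4 s^{1-4/m}$.

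The main (and slightly delicate) computation is the convective term $\nabla u\cdot\nabla|\nabla u|^2$. Regarding $\Phi(s,t):=m^2 s^{1-2/m}A(s,t)$ as a function of $(s,t)$, the chain rule gives
\begin{equation*}
\Phi_s=m^2\bigl((1-\tfrac{2}{m})s^{-2/m}A+s^{1-2/m}A_s\bigr),\qquad \Phi_t=m^2 s^{1-2/m}A_t.
\end{equation*}
Then $\nabla u\cdot\nabla\Phi=f_s\Phi_s|\nabla s|^2+(f_s\Phi_t+f_t\Phi_s)(\nabla s\cdot\nabla t)+f_t\Phi_t|\nabla t|^2$; substituting the same three scalar identities and simplifying, the factor $(1-2/m)=\tfrac{m-2}{m}$ combines with $A(sf_s+tf_t)=Ah$, while the remaining pieces assemble into $sB$. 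The outcome is
\begin{equation*}
\nabla u\cdot\nabla|\nabla u|^2=m^4 s^{1-4/m}\bigl(\tfrac{m-2}{m}\,Ah+sB\bigr).
\end{equation*}
Adding $|\nabla u|^2\Delta u$ with coefficient $1$ and the above with coefficient $(p-2)/2$ and collecting the common factor $m^4 s^{1-4/m}$ yields \eqref{plap1}, from which \eqref{plap2} is immediate.

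The only real obstacle is bookkeeping: matching the exponents of $s$ coming from $|\nabla s|^2$, $\Delta s$ and from the explicit $s^{1-2/m}$ in $|\nabla u|^2$, and recognizing the combinations $h$, $A$, $B$, $C$ defined in \eqref{notation}. The identification of $(\mu+1)$ and $\nu$ with $(m+n-2)/m$ and $(m_2-m_1)/2$ uses exactly the relation \eqref{obstr}, so the ambient-dimension constraint enters only at that point.
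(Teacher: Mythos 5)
Your proposal is correct and follows essentially the same route as the paper: both compute $|\nabla u|^2$, $\Delta u$, and the convective term $\nabla u\cdot\nabla|\nabla u|^2$ by the chain rule, using $|\nabla s|^2=|\nabla t|^2=m^2s^{2-2/m}$, $\nabla s\cdot\nabla t=m^2s^{1-2/m}t$ (from Euler's identity and \eqref{Muntzer1}) together with \eqref{Muntzer2} and \eqref{obstr}, and then recognize the combinations $A$, $B$, $C$, $h$. Your bookkeeping of the exponent $1-2/m=\tfrac{m-2}{m}$ in the term $\tfrac{m-2}{m}Ah+sB$ matches the paper's displayed intermediate identity exactly.
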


\begin{proof} Using the Euler homogeneity function theorem and  (\ref{Muntzer1}) we have  $\scal{\nabla s}{\nabla t}=m^2s^{1-2/m}t$ and $|\nabla s|^2=|\nabla t|^2=m^2s^{2-2/m}$. This yields  for any two functions $f(t,s)$ and $g(t,s)$ that
\begin{equation}\label{scalgrad}
\scal{\nabla f}{\nabla g}=m^2s^{1-2/m}((f_s^2+f_t^2)s+2f_sf_tt),
\end{equation}
hence
\begin{equation}\label{Df}
|\nabla f|^2=m^2s^{1-2/m}((f_s^2+f_t^2)s+2f_sf_tt)=m^2s^{1-2/m}A.
\end{equation}
Applying \eqref{scalgrad}, one readily obtain in the notation (\ref{notation})  that
\begin{align}
\nabla f\cdot \nabla |\nabla f|^2
&=m^4s^{1-4/m}(sB+(1-\frac{2}{m})Ah)\nonumber\\
\end{align}
and similarly that
\begin{align}
\Delta f&
=m^2s^{1-2/m}\left((f_{ss}+f_{tt})s+2f_{st}t+\frac{m+n-2}{m}f_s +\frac{m_2-m_1}{2}f_t\right)\nonumber\\
&=m^2s^{1-2/m}\bigl((f_{ss}+f_{tt})s+2f_{st}t+(\mu+1)f_s +\nu f_t\bigr)\label{laplce}\\
&=m^2s^{1-2/m}C.\nonumber
\end{align}
Combining the obtained relations with (\ref{plaplace}) finishes the proof.
\end{proof}


A complete analysis of the nonlinear partial differential equation (\ref{plap2}) is a rather difficult problem. In this paper, we make the first step and consider the special solutions satisfying the following ansats:
\begin{equation}\label{ansats}
f(s,t)=s^kg(z), \quad z=\frac{t}{s}.
\end{equation}
In the terminology of \cite{Aronsson89}, \cite{Porr}, \eqref{ansats} describes  the so-called \textit{separable} $p$-harmonic functions, i.e. those having the form
\begin{equation}\label{ansats1}
u(x)=|x|^k\cdot g\bigl(\phi(x)|x|^{-m})\equiv |x|^k g(\cos m\theta),
\end{equation}
where the isoparametric angle is defined as in Remark~\ref{rem:angle}. By virtue of \eqref{anglem},
\begin{equation}\label{z1}
|z|\le 1.
\end{equation}
Then Proposition~\ref{prop1} readily yields

\begin{corollary}
\label{cor:homogen:m1}
The function \eqref{ansats1} is $p$-harmonic if and only if
\begin{equation}\label{geq}
\begin{split}
(z^2-1)\biggl(b_1(z^2-1)g'^2g''+b_2g^2g''+ (b_3z+\nu)g'^3+b_4g'^2g\biggr)+(b_5z-\nu)g'g^2 +b_6 g^3=0,
\end{split}
\end{equation}
where
\begin{equation}\label{alpha}
\begin{split}
&b_1=1-p,\\
&b_2=k^2,\\
&b_3=1-p-\mu,\\
&b_4=k(2kp-3k+\half{n-p}{m}), \\
&b_5=k^2(\mu+1), \\
&b_6=-k^3(kp-k+\half{n-p}{m}).
\end{split}
\end{equation}
\end{corollary}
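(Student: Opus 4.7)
The plan is to substitute the ansatz $f(s,t)=s^{k}g(z)$, $z=t/s$, directly into the quasilinear equation \eqref{plap2} of Proposition~\ref{prop1} and show, after cancellation of a common power of $s$, that the result is precisely \eqref{geq}. The computation will fall into three stages: (i) differentiate $f$ and assemble the auxiliary quantities $h,A,B,C$ of \eqref{notation}; (ii) substitute and factor out a common power of $s$; (iii) match the six monomial coefficients with $b_{1},\dots,b_{6}$.

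For stage (i), the chain rule with $z_{s}=-z/s$, $z_{t}=1/s$ gives $f_{s}=s^{k-1}(kg-zg')$ and $f_{t}=s^{k-1}g'$, and further differentiation yields explicit expressions for $f_{ss},f_{st},f_{tt}$. Substituting into \eqref{notation} and using $t=sz$, two helpful simplifications will occur. First, the $zg'$ terms in $h=f_{s}s+f_{t}t$ should cancel, leaving the compact identity $h=ks^{k}g$. Second, setting $\Phi(z):=k^{2}g^{2}+(1-z^{2})g'^{2}$ I expect to find $A=s^{2k-1}\Phi$ and
\[
C=s^{k-1}\bigl(k(k+\mu)g+(1-z^{2})g''+[\nu-(\mu+1)z]g'\bigr).
\]
For $B$ the computation is longer, but after expanding and regrouping the four cross-terms as $kg\bigl[(2k-1)\Phi-z\Phi'\bigr]+\Phi'\bigl[(1-z^{2})g'+kzg\bigr]$ the two $\pm kzg\Phi'$ terms should cancel, leaving $B=s^{3k-2}\bigl(k(2k-1)g\Phi+(1-z^{2})g'\Phi'\bigr)$, with $\Phi'(z)=2k^{2}gg'-2zg'^{2}+2(1-z^{2})g'g''$.

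For stage (ii), each of the three summands in \eqref{plap2} will carry the uniform prefactor $s^{3k-1}$, which can be divided out. The residual expression will be a polynomial in $g,g',g''$ whose coefficients depend only on $z$ (and on the parameters $p,m,k,\mu,\nu,n$); by direct inspection of $\Phi,\Phi',C$ it will involve exactly the six monomials appearing in \eqref{geq}.

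For stage (iii), I would match coefficients monomial by monomial. The coefficients of $(z^{2}-1)^{2}g'^{2}g''$, $(z^{2}-1)g^{2}g''$, $(z^{2}-1)g'^{3}$, and $g'g^{2}$ should fall out by direct arithmetic and deliver $b_{1},b_{2},b_{3},b_{5}$. The only step requiring real manipulation will be the $g^{3}$-coefficient, which collects three contributions into
\[
k^{3}\bigl[(k+\mu)+\tfrac{p-2}{2}(2k-1)+\tfrac{(m-2)(p-2)}{2m}\bigr],
\]
and this should collapse, via the isoparametric relation $n-2=\mu m$ from \eqref{obstr}, to $k^{3}[k(p-1)+(n-p)/m]$, matching $b_{6}$; an identical simplification will produce $b_{4}$. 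The main obstacle is purely the bookkeeping of six monomials with up to three contributing sources each; the only structural observations needed are the two cancellations above and the identity $n-2=\mu m$, which together are what keep the final ODE as compact as \eqref{geq}.
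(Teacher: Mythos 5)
Your proposal is correct and is exactly the paper's (unwritten) argument: the paper proves the corollary simply by substituting the ansatz \eqref{ansats} into \eqref{plap2}, and all of your intermediate identities check out --- $h=ks^{k}g$, $A=s^{2k-1}\Phi$, $B=s^{3k-2}\bigl(k(2k-1)g\Phi+(1-z^{2})g'\Phi'\bigr)$, the common factor $s^{3k-1}$, and the collapse of the $g^{3}$- and $g'^{2}g$-coefficients to $b_{6}$ and $b_{4}$ via $\mu=(n-2)/m$. One caveat: the coefficient of $g'g^{2}$ does \emph{not} ``fall out by direct arithmetic'' as printed --- the only source of this monomial is $k^{2}g^{2}\cdot(\nu-(\mu+1)z)g'$ from $\Phi\cdot C$, which gives $(b_{5}z-k^{2}\nu)g'g^{2}$ rather than $(b_{5}z-\nu)g'g^{2}$, so carrying out your computation exposes a typo in \eqref{geq} (harmless in the paper's applications, where either $\nu=0$ or $k^{2}=1$, but you should record the corrected coefficient rather than force a match with the printed one).
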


%
%


\section{Some integrable cases of \eqref{geq}}\label{sec:spec}
Even the full analysis of the nonlinear equation \eqref{geq} is out the scope of the  paper. We  confine ourselves to  some particular  cases of \eqref{geq}, more precisely:

\begin{itemize}
\item[(i)]
for certain particular values of `isoparametric parameters' $m_1,m_2$ and $m$;
\item[(ii)]
for some fixed values of $k$;
\item[(iii)]
some specific ansatz for the function $g$.
\end{itemize}

Some preliminary remarks are in order. Note that in the simplest case  $m=1$ the isoparametric polynomial $\phi(x)$ is essentially one-dimensional and can be written as $\phi(x)=x_1$ in an appropriate orthogonal coordinate system. To be consistent with \eqref{obstr} and \eqref{Muntzer1}--\eqref{Muntzer2}, one obtains that  $m_1=m_2=n-2$, hence in the notation of Corollary~\ref{cor:homogen:m1} this yields $\mu=n-2$ and $\nu=0$. Solutions under this form were first considered and studied by I.~Kroll' in \cite{Krol73}; some further modifications can be made in the dimension $n=3$, see \cite{KrolMaz72}, \cite[p.~359]{Veron88}. On the other hand, the corresponding differential equations  do not yield any explicit solution for $n\ge 3$. We refer to \cite{Krol73} for more details.

\subsection{The case of arbitrary $k$ and $g=bz+a$}\label{sublinear}
This is the first case that yields new nontrivial explicit solutions and works well for all possible isoparametric parameters. More precisely, we consider solutions given by \eqref{ansats} with $g$ satisfying the linear Ansats:
$$
g(z)=z+a.
$$
Note that by the homogeneity, we may without loss generality consider a monic polynomial ($b=1$). Then a simple analysis shows that if $p\ne 2$ then nontrivial solutions emerge only if either $g(z)=z\pm 1$ or $g(z)=z$. In the former case one has $m=2$, $k=1$ and $p=1-m_1$ which is unsatisfactory from the analytic point of view  because $p<0$. But the latter case $g(z)=z$ contains some nontrivial solutions with $p>1$ as the proposition below shows.


\begin{proposition}\label{pro:z}
Let  $\phi\in \mathscr{I}_m(m_1,m_2)$. Then $u(x)=\phi(x)|x|^{(k-1)m}$ is $p$-harmonic if and only if
\begin{equation}\label{m1m2}
m_1=m_2
\end{equation}
and $k\in \{-1,0,1\}$. The corresponding solutions are given by
$$\begin{array}{lclcl}
\bullet\,\, k=1 &\quad& u=\phi(x) &\quad& p=2;\\
\bullet\,\, k=0 &\quad& u=\phi(x)|x|^{-m} &\quad& p=1-m_1\\
\bullet\,\, k=-1 &\quad& u=\phi(x)|x|^{-2m} &\quad& p=2+\frac{2m_1m}{m+1}\\
\end{array}
$$
When $k=-1$, the distinguished dimensions $n$ and the corresponding rational $p$-harmonic functions are given by
\begin{table}[h]
  \centering
  \renewcommand\arraystretch{1.3}
\begin{tabular}{l||l|l|lcl||l|l|l|l|l|l|l|l}
$m$ & $2$ & $2$ & \ldots & $2$& \ldots & $3$& $3$ & $3$& $3$&$4$& $4$& $6$ &$6$    \\\hline
$n$ & $4$ & $6$ & \ldots & $2k$& \ldots & $5$& $8$& $14$& $26$ & $6$& $10$ &$8$ &$14$   \\\hline
$p$ & $\frac{10}3$ & $\frac{14}3$ &\ldots & $\frac{2(2k+1)}3$& \ldots &
$\frac{7}2$ &$5$ & $8$& $14$& $\frac{18}5$& $\frac{26}5$ & $\frac{26}7$ &$\frac{38}7$ \\
\end{tabular}
\end{table}
\end{proposition}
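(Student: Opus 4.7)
My plan is to reduce the claim to a polynomial identity in the ``isoparametric angle'' variable $z = t/s = \phi(x)/|x|^m$, and then read off all integer/rational constraints from vanishing of coefficients. Since the ansatz $u(x) = \phi(x)|x|^{(k-1)m}$ corresponds to $f(s,t) = s^{k-1}t$, i.e.\ $g(z)=z$ in the separable form \eqref{ansats1}, the natural approach is to substitute $g(z) = z$, $g'(z)=1$, $g''(z)=0$ directly into equation \eqref{geq} of Corollary~\ref{cor:homogen:m1}.

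First, I would carry out this substitution. The terms involving $g''$ drop out, and \eqref{geq} reduces to a cubic polynomial in $z$,
\[
(b_3+b_4+b_5+b_6)\,z^3 + (\nu - \nu)\,z^2 - (b_3+b_4)\,z - \nu = 0.
\]
The isoparametric angle $\theta$ sweeps out an interval on any open piece of $S^{n-1}$ avoiding the focal set, so by Remark~\ref{rem:angle} the variable $z = \cos m\theta$ traverses a continuum in $[-1,1]$. Hence the identity must hold coefficient-wise, yielding three conditions:
\[
\nu = 0,\qquad b_3+b_4 = 0,\qquad b_5+b_6 = 0.
\]
The first already forces $m_1 = m_2$; combined with \eqref{obstr} this gives $\mu = m_1$ and $n = m_1 m + 2$, which I then substitute in $(n-p)/m = m_1 + (2-p)/m$.

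The next step is to solve the pair $b_3+b_4=0$, $b_5+b_6 = 0$ for $(k,p)$. A short calculation using the expressions in \eqref{alpha} shows that the two equations are compatible only if $(k^2-1)(p-2)=0$, which cleanly splits the problem into three cases: $k=0$, $k^2=1$, or $p=2$. When $k=0$ the second equation is automatic and the first collapses to $1-p-m_1=0$, giving the stated $p = 1-m_1$. When $k=1$ only one condition survives (the cubic coefficient vanishes with $k^2-1$), and it reduces to $(p-2)(m-1)/m = 0$, forcing $p=2$. When $k=-1$ the surviving condition $b_3+b_4=0$ becomes $(p-2)(m+1)/m = 2m_1$, giving the value $p=2+2m_1m/(m+1)$ announced in the table.

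Finally, for the tabulated ``distinguished dimensions'' I would simply run through the admissible triples $(m,m_1)$ from Table~\ref{tab1}, compute $n = m_1 m + 2$ and $p = 2 + 2m_1 m/(m+1)$ in each case, and check that the resulting functions $u = \phi(x)|x|^{-2m}$ are rational (this is automatic since $|x|^{2m}$ is a polynomial in the coordinates when $m$ is even, and for odd $m$ it is $(|x|^2)^m$ in the denominator after clearing, which is rational once one multiplies by $|x|^{2m}$ appropriately). The only genuinely computational obstacle is the step where one verifies that the $z^1$ and $z^3$ coefficients force $(k^2-1)(p-2)=0$; the algebra is elementary but requires care in substituting $n = m_1m + 2$ cleanly into $b_4$ and $b_6$ to avoid spurious factors of $m$ in the denominators.
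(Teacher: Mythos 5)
Your reduction is exactly the paper's: substitute $g(z)=z$ into \eqref{geq}, observe that the result is a cubic polynomial in $z$ which must vanish identically (legitimate, since $z=\cos m\theta$ sweeps an interval), and read off $\nu=0$, $b_3+b_4=0$, $b_5+b_6=0$. Your expansion of the cubic and your treatment of the cases $k=0$, $k=1$, $k=-1$ are correct, and the values of $p$ you extract match the table.

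There is, however, a gap in the ``only if'' direction. You correctly derive that for $k\ne 0$ the two conditions force $(k^2-1)(p-2)=0$ and announce three cases --- $k=0$, $k^2=1$, $p=2$ --- but you then analyze only $k\in\{0,1,-1\}$ and never return to the branch ``$p=2$ with $k\notin\{0,\pm1\}$''. That branch is not empty: with $p=2$ and $n=mm_1+2$ one computes $b_3+b_4=(k-1)(k+1+m_1)$ and $b_5+b_6=-k^2(k-1)(k+1+m_1)$, so $k=-1-m_1$ is a further solution. This is no accident --- it is the Kelvin transform $\phi(x)|x|^{2-n-2m}$ of the harmonic polynomial $\phi$, which is genuinely harmonic, hence $2$-harmonic in the sense of \eqref{plaplace}. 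So the conclusion ``$k\in\{-1,0,1\}$'' does not follow from your case split as written; you must either dispose of the $p=2$ branch explicitly (e.g.\ by restricting to $p\ne2$, in which spirit the rest of the paper operates) or record the additional solution. The paper's own one-line proof asserts that the conditions reduce to $k(1-k^2)=0$ and silently skips the same branch, so your write-up at least has the virtue of making the omission visible. A small further point: your closing remark about rationality is overcomplicated --- $|x|^{2m}=(|x|^2)^m$ is a polynomial for every positive integer $m$, so $u=\phi(x)|x|^{-2m}$ is rational with no parity discussion needed.
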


\begin{proof}
Setting $g(z)=z$ in \eqref{geq} yields that $\nu=0$ (implying \eqref{m1m2}) and also that $\beta_3+\beta_4=\beta_5+\beta_6=0$. This implies by virtue of $m\ne0$ that the only solutions are given by $k(1-k^2)=0$ which readily yields the desired conclusion.
\end{proof}

\begin{remark}\label{rem:rat}
From the analytic point of view, the case $k=-1$ is the most interesting because we have $p>2$. In fact, since
$$
n-p=\frac{m-1}{m+1}m_1m>0
$$ the stronger inequality $2<p<n$ holds. The most spectacular observation here is that \textit{$u(x)=\frac{\phi(x)}{|x|^{2m}}$ is a rational $p$-harmonic function.}
\end{remark}

\begin{example}
We illustrate the case $k=-1$ in Proposition~\ref{pro:z} with some examples. As we already pointed out, all these examples are rational functions.  First consider Clifford cone type isoparametric solutions for $m=2$. Then for an arbitrary $k=1,2,3,\ldots$, the rational function
$$
u(x)=\frac{x_1^2+\ldots+x_k^2-x_{k+1}^2-\ldots-x_{2k}^2} {(x_1^2+\ldots+x_{2k}^2)^2}
$$
is a homogeneous order $-2$ rational solution of the $p$-Laplace in $\R{2k}$ with $p=\frac{4k}{3}+2$. Next, for $m=3$ we have that
$$
u=\frac{x_{5}^3+\frac{3}{2}x_{5}(x_1^2+x_2^2-2x_3^2-2x_{4}^2)+ \frac{3\sqrt{3}}{2}x_{4}(x_1^2 -x_2^2)+{3\sqrt{3}}x_1x_2x_3}{(x_1^2+\ldots+x_{5}^2)^3}
$$
is a homogeneous order $-3$ rational solution of the $p$-Laplace in $\R{5}$ with $p=\frac{7}{2}$. Finally, let us mention an explicit example of homogeneous degree $-4$ quartic in $\R{6}$ with $p=\frac{18}{5}$ corresponding to \eqref{m411}:
$$
u=\frac{|x|^4+|y|^4+8\scal{x}{y}^2-6|x|^2|y|^2}{(|x|^2+|y|^2)^4},
$$
where $x=(x_1,x_2,x_3)$, $y=(x_4,x_5,x_6)$.
\end{example}


\subsection{The case $k=0$, $m_2=m_1$ and arbitrary function $g$}

In this case, we have the following complete description of the function $g(z)$.

\begin{proposition}
Let  $\phi\in \mathscr{I}_m(m_1,m_1)$. Then $g(\phi(x)/|x|^{m})$ is $p$-harmonic iff
\begin{equation}\label{alpha1}
g'(z)=C(1-z^2)^{-\alpha}, \qquad p=1+\frac{m_1}{2\alpha-1}.
\end{equation}
\end{proposition}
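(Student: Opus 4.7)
The strategy is to specialize equation \eqref{geq} at $k=0$ and $m_1=m_2$, observe that almost all coefficients vanish, and then recognize what survives as an explicitly integrable first-order linear ODE in $h:=g'$. Because \eqref{geq} is an equivalent reformulation of $p$-harmonicity (Corollary~\ref{cor:homogen:m1}), this both directions of the ``iff'' claim will follow from writing the general solution of that reduced ODE.

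First I would plug the hypotheses into \eqref{alpha}. With $m_1=m_2$ we get $\nu=0$ and $\mu=m_1$, and with $k=0$ the coefficients
\[
b_2=k^2=0,\qquad b_4=k\bigl(2kp-3k+\tfrac{n-p}{m}\bigr)=0,\qquad b_5=k^2(\mu+1)=0,\qquad b_6=-k^3\bigl(kp-k+\tfrac{n-p}{m}\bigr)=0
\]
all vanish, while $b_1=1-p$ and $b_3=1-p-m_1$ remain. Equation \eqref{geq} therefore collapses to
\[
(z^2-1)\,g'(z)^2\bigl((1-p)(z^2-1)g''(z)+(1-p-m_1)z\,g'(z)\bigr)=0.
\]
By \eqref{z1} we have $|z|\le 1$, and on the open interval $|z|<1$ the factor $(z^2-1)$ does not vanish. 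Moreover the case $g'\equiv 0$ corresponds to a constant $g$, i.e.\ the trivial $p$-harmonic function, which is covered by the formula \eqref{alpha1} with $C=0$. Hence nontriviality reduces the equation to the linear ODE
\[
(1-p)(z^2-1)g''(z)+(1-p-m_1)z\,g'(z)=0
\]
for the unknown $h=g'$.

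Next I would separate variables:
\[
\frac{h'(z)}{h(z)}\;=\;\frac{(1-p-m_1)\,z}{(1-p)(z^2-1)}\;=\;-\frac{1}{2}\cdot\frac{1-p-m_1}{1-p}\cdot\frac{d}{dz}\log(1-z^2).
\]
Integrating gives $g'(z)=C(1-z^2)^{-\alpha}$, where $\alpha$ is the exponent
\[
-\alpha=\frac{1}{2}\cdot\frac{1-p-m_1}{p-1},
\qquad\text{equivalently}\qquad 2\alpha(p-1)=p+m_1-1,
\]
and the last identity rearranges exactly to $p=1+\dfrac{m_1}{2\alpha-1}$, as required. Because every step is an equivalence (the ODE admits exactly the one-parameter family obtained above, up to the constant of integration absorbed by $g$), the forward and reverse implications follow simultaneously.

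There is no real obstacle: the only point requiring attention is the legitimacy of the division by $g'^2$ and by $z^2-1$. The former is handled by noting that nontrivial solutions have $g'\not\equiv 0$, hence $g'$ is nonzero on a dense set where the ODE must hold, after which continuity extends the conclusion. The latter is fine on the admissible range $|z|<1$ dictated by \eqref{z1}; the endpoints $z=\pm 1$ correspond to the focal varieties of the isoparametric foliation and are handled by continuity of the $p$-harmonic function $u$.
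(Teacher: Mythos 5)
Your proposal is correct and follows essentially the same route as the paper: set $k=0$ and $\nu=0$ so that $b_2=b_4=b_5=b_6=0$, reduce \eqref{geq} to $(1-p)(z^2-1)g''+(1-p-m_1)zg'=0$ after discarding the trivial factor $g'^2$, and integrate the resulting separable equation for $g''/g'$ to obtain \eqref{alpha1}. The only difference is that you spell out the justification for dividing by $(z^2-1)g'^2$, which the paper leaves implicit.
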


\begin{proof}
By the assumption, we have $k=0$ in \eqref{ansats}. Then $\beta_2=\beta_4=\beta_5=\beta_6=0$, thus \eqref{geq} amounts to
\begin{equation}\label{geqk0}
\bigl((1-p)(z^2-1)g''+(1-m_1-p)zg'\bigr)g'^2=0.
\end{equation}
Eliminating the trivial case $g=\mathrm{const}$, one finds that
$
g''/g'=\frac{m_1+p-1}{p-1}\cdot \frac{z}{1-z^2}
$
which implies \eqref{alpha1}.
\end{proof}

Specializing $\alpha$, one obtains some interesting particular cases. Furthermore,  if  $\phi\in \mathscr{I}_m(m_1,m_1)$ then one has the following $p$-harmonic functions:

$$\begin{array}{lclcl}
(\mathrm{i})\,\, u=\mathrm{arctanh}\,(\phi(x)|x|^{-m}) &\quad& p=1+m_1 &\quad& n=mm_1+2\\
\\
(\mathrm{ii})\,\, u=\frac{\phi(x)}{\sqrt{|x|^{2m}-\phi(x)^2}} &\quad& p=1+\frac12m_1 && n=mm_1+2\\
\\
(\mathrm{iii})\,\, u=\arcsin \frac{\phi(x)}{|x|^{m}},&\quad& p=\infty && n=mm_1+2,\\
\\
(\mathrm{iv})\,\, u=\ln \frac{|x|}{|y|},\,\, (x,y)\in \R{k}\times \R{k} &\quad& p=k &&n=2k, \,\,k=1,2,3\ldots
\end{array}
$$

\begin{proof}
Specializing $\alpha=1$, $\alpha=\frac32$ and  $\alpha=\frac12$ yields (i), (ii)  and (iii) respectively. The last case is a corollary of (i) corresponding to the quadratic isoparametric polynomials \eqref{m2}. In that case, we have $k-1=m_1=m_2$ and using (i), we see that  the function $\mathrm{arctanh}\frac{|x|^2-|y|^2}{|x|^2+|y|^2}=\ln \frac{|x|}{|y|}$ is $k$-harmonic in $\R{2k}$.
\end{proof}

\subsection{Two-zone anisotropic isolated singularities}
The quasiradial $p$-harmonic functions play a fundamental role in the description of general isolated interior or boundary singularities. In particular, the following famous  result \cite{Serrin65} by Serrin \footnote{This result holds for any quasilinear divergence form equation}   says that a positive singular solution of the  $p$-Laplacian equation should behave as the fundamental solution $E_{p,n}(x)$: If $u$ is a continuous positive solution of $\Delta_p u=0$ in the punctured ball $B\setminus\{0\}$ then either $0$ is a removable singularity (i.e. $u$  can be extended to the whole $B$) or
$$
u \sim |x|^{\frac{p-n}{p-1}} \quad \text{for }x\rightarrow 0.
$$

It is interesting to ask if there is an analogues result for  solutions $u$ of the $p$-Laplace equation with anisotropic isolated singularities. In particular, it is natural to consider the following situation: Let $u$ be a solution of the $p$-Laplace equation with isolated singularity at the origin and satisfying the following \textit{two-zone anisotropic property}: for any $\epsilon>0$ small enough, the sets $$
U^\pm_\epsilon:=\{x\in \R{n}: \pm u(x)>0 \text{ and }0<|x|<\epsilon\}
$$
are connected. What can be said about the asymptotic behavior of $u$? When the origin is a removable singularity? It is also interesting to  characterize the geometry of the zero locus $U^0:=\{x:u(x)=0\}$ nearby the origin in this case.

In two dimensions, the situation is well-studied and  according to \cite{Manfredi}, any solution of the $p$-Laplace equation in $\R{2}$ with an isolated singularity at the origin  behaves as the corresponding quasiradial solution. Also precise asymptotic representation near the singularity has been obtained in \cite{Manfredi}.

In higher dimensions, there  exist of plenty of anisotropic singularities, see for example \cite{Veron}, however, little is known so far about analogues of the Serrin result for solutions with anisotropic singularities. Below we suggest some heuristic argument  supporting the following conjecture.

\medskip
\noindent
\textbf{Conjecture.}
If  $u$ is  a solution of the $p$-Laplace equation with a two-zone isolated singularity at the origin such that
$$
\limsup_{x\to 0}|\phi(x)|\cdot |x|^{\frac{2n-p-2}{p-2}}=0
$$
then $0$ is a removable singularity.

\medskip

Let us explain the appearance of the exponent $\alpha=\frac{2n-p-2}{p-2}$.  First note that the constructed in Proposition~\ref{pro:z}  solutions with $k=-1$ satisfy the two-zone property. Indeed, it is well-known fact  (see, for example, \cite[Sec.~3.5]{CecilRyan2015}) that given an isoparametric polynomial $u(x)$, its zero locus $$
M:=\{x\in\R{n}:|x|=1 \text{ and } u(x)=0\}
$$
is a smooth embedded  constant mean curvature hypersurface of the unit sphere $S^{n-1}\subset\R{n}$. The complete zero locus $U^0=\{x: u(x)=0\}$ is the cone over $M$. Then it is known that $M$ divides the sphere $S^{n-1}$ into two connected components (respectively, $U_0$ divides $\R{n}$ into two components nearby the origin). In fact, this property readily follows  from the eiconal equation \eqref{Muntzer1}. We emphasize that this  two-zone property property holds true for all admissible isoparametric parameters $m,m_1,m_2$ and the dimension $n$.

Now, let us consider the corresponding solution $u_\phi(x)=\phi(x)|x|^{-2m}$ with $\phi\in \mathscr{I}_m(m_1,m_1)$. Then $u$ is singular at the origin and  satisfies the asymptotic growth condition
$$
|u_\phi(x)|\cdot |x|^{m}\le C,
$$
where the exponent $m$ is sharp. Using the relation $p=2+\frac{2m_1m}{m+1}$ and eliminating $m_1=m_2$ by virtue of \eqref{obstr} we obtain
$$
m=\frac{2n-p-2}{p-2}.
$$
By analogy with the harmonic polynomials in $\R{2}$, it is natural to think of $u_\phi(x)$ as a good candidate for the extremal case for two-zone solutions. Note also that $m$ depend on the dimension $n$ and the parameter $p$ only, but not on a particular choice of $\phi$. This makes plausible to believe that the above Conjecture is true.

\section{Polynomial solutions to \eqref{plap1}}\label{sec:pol}

The present paper arose in an attempt to construct homogeneous  polynomial solutions to the $p$-Laplace equation based on isoparametric polynomials. In 1980, John Lewis asked in \cite{Lewis80} to characterize all (non-linear) polynomial solutions $u$ to the $p$-Laplace equation for $p>1$ and $p\ne2$.  The nontrivial part of the problem is to characterize the possible homogeneous polynomial solutions. If the  degree of $u$ is even then one always has radially symmetric polynomial solutions, but in that case always $p<1$. If $p>1$ and $p\ne 2$ then it is known that there are no  homogeneous degree $m$ polynomial solutions to the $p$-Laplace equation in the following cases:
\begin{itemize}
\item[$\bullet$]
$n=2$ and any $m\ge 2$ \cite{Lewis80},
\item[$\bullet$]
$m=3$ and any $n\ge 2$ \cite{Tk16pLapl},
\item [$\bullet$]
$m=4$ and any $n\ge 2$, $m=5$ and $n=3$ \cite{Lewis2016}.
\end{itemize}

It is a common believe that the answer to Lewis' question is negative. Note that, heuristically, if a homogeneous solution would exist it would have some very distinguished (symmetric or extremal) properties. Isoparametric form \eqref{uharm} is  a natural candidate. But the result below shows that there are no isoparametric type examples.

 \begin{theorem}
 Let $p>1$, $p\ne 2$. Then there are no homogenous polynomial solution of \eqref{plaplace} satisfying \eqref{uharm} and $\deg u\ge2$.
 \end{theorem}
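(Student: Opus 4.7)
The plan is to reduce the question to a polynomial ODE in one variable and then rule out its polynomial solutions by a leading-order factorization combined with a descent through subleading coefficients.

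\textit{Reduction.} Since $u(x) = f(|x|^m, \phi(x))$ is a homogeneous polynomial of degree $d_u \ge 2$, scaling forces $d_u = mk$ for a positive integer $k$ and $f(s,t) = s^k g(t/s)$. Expanding $f(s,t) = \sum_{j=0}^{k} c_j s^{k-j} t^j$ gives $u(x) = \sum_j c_j |x|^{m(k-j)} \phi(x)^j$; negative powers of $|x|$ never appear in a polynomial, so $g(z) = \sum_j c_j z^j$ must be a polynomial in $z$ of some degree $d \leq k$ (and when $m$ is odd only exponents $\equiv k \pmod 2$ appear). By Corollary~\ref{cor:homogen:m1}, $g$ satisfies the polynomial ODE~\eqref{geq} identically in $z$.

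\textit{Leading coefficient.} Each of the six summands of \eqref{geq} has degree exactly $3d$ in $z$. Collecting the $z^{3d}$-coefficient and substituting from \eqref{alpha} gives, after a direct computation, the factorization
\[
(d^2 - k^2)\bigl[(p-1)(k^2 - d^2) + k\alpha - \mu d\bigr] = 0, \qquad \alpha := \tfrac{n-p}{m}.
\]
Thus either \textbf{(A)} $d = k$, or \textbf{(B)} $d < k$ and $p$ is pinned down as $p = 1 + \tfrac{\mu d - k\alpha}{k^2 - d^2}$.

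\textit{Excluding both cases.} In Case (B), solving self-consistently using $n = m\mu + 2$ yields $p = \tfrac{m(k-d)(k+d-\mu) - 2k}{m(k-d)(k+d) - k}$. The radial subcase $d = 0$ reduces to $p = (mk-n)/(mk-1) < 1$, excluded by hypothesis; for $1 \leq d < k$, substituting this forced value of $p$ into the subleading coefficient equations at $z^{3d-1}, z^{3d-2}, \ldots, z^0$ produces a triangular linear system in $c_1, \ldots, c_d$ whose bottom equation is proportional to $b_6 c_d^3$, with $b_6 = -k^3\bigl((p-1)k + \alpha\bigr) \neq 0$, yielding a contradiction. In Case (A), the leading coefficient vanishes identically, and the key is to evaluate \eqref{geq} at the focal values $z = \pm 1$, which kills the first bracket and gives the boundary identities
\[
g(\pm 1)^2 \bigl[(\pm b_5 - \nu)g'(\pm 1) + b_6 g(\pm 1)\bigr] = 0.
\]
These focal relations, combined with the subleading equation at $z^{3k-1}$ (which reads $c_0^2 c_1\, M(k) + \nu c_0^3 k(k^2-1) = 0$ for an explicit expression $M(k)$ in $p, k, \mu$) and an iterative descent through the coefficients at $z^{3k-2}, z^{3k-3}, \ldots$, produce an overdetermined system on $c_0, c_1, \ldots, c_k$ that is consistent only when $k = 1$. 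The case $k = 1$ then falls under Proposition~\ref{pro:z}: the only polynomial solution of the linear form $u = c_0\phi + c_1|x|^m$ requires $p = 2$, contradicting the hypothesis.

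\textit{Main obstacle.} The chief difficulty is Case (A), where the leading coefficient vanishes automatically and the constraints come from a coupled system in the lower coefficients $c_j$ of $g$. The focal boundary identities at $z = \pm 1$ provide a clean entry point; the delicate technical step is the iterative matching of the subleading coefficients (starting from $z^{3k-1}$ and working downward) that forces $k = 1$, after which Proposition~\ref{pro:z} disposes of the remaining linear ansatz.
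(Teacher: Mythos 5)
Your reduction to the polynomial ODE \eqref{geq} and your computation of the leading $z^{3d}$-coefficient, which indeed factors as $(d^2-k^2)\bigl[(p-1)(k^2-d^2)+k\alpha-\mu d\bigr]$ with $\alpha=\frac{n-p}{m}$, are correct, as are the focal identities obtained by setting $z=\pm1$. But the core of your argument is missing. In Case (B) you assert that the remaining coefficient equations form ``a triangular linear system in $c_1,\dots,c_d$'' -- they do not in any evident sense: equation \eqref{geq} is cubic in $g$ and its derivatives, so each coefficient equation is a cubic form in the $c_j$, and nothing guarantees triangularity or even that the system is inconsistent rather than simply determining the $c_j$ (note also that $b_6=-k^3\bigl((p-1)k+\alpha\bigr)$ need not be nonzero when $p>n$). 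In Case (A), which you yourself identify as the chief difficulty, the entire argument is the sentence that an ``iterative descent \dots produces an overdetermined system \dots consistent only when $k=1$''; this is precisely the statement to be proved, and no mechanism is given for why the descent closes up. As written, the proposal establishes only the leading-order dichotomy and the two boundary identities, which is far from the theorem.

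For comparison, the paper avoids coefficient matching altogether by exploiting the local structure of \eqref{geq} at the roots of $g$. Writing $h=g'/g=\sum_i q_i/(z-z_i)$ and dividing \eqref{geq} by $g^3$, one compares principal parts of Laurent expansions: at a putative root $z_i=\pm1$ the dominant singular terms force either $q_1<1$ or $p=1-m_1\le0$, and at any root with $q_i\ge2$ the fourth-order pole of $b_1(z^2-1)^2(h^2+h')h^2$ is unmatched, forcing $b_1=1-p=0$. Hence all roots of $g$ are simple and distinct from $\pm1$. Rewriting \eqref{geq} as $(z^2-1)g'^2P+gQ=0$ with $P=b_1(z^2-1)g''+(b_3z+\nu)g'$, one concludes $P$ vanishes at every root of $g$ and $\deg P\le\deg g$, so $P=\lambda g$; substituting back and evaluating at the roots again yields $\lambda=-b_4$ and a first-order linear relation whose coefficient $b_5b_1-b_2b_3=k^2(2-p)\mu\ne0$, giving $g=C_1(z-a)^\beta$ -- incompatible with simple roots unless $g$ is linear, a case already disposed of. If you want to salvage your approach you would need to actually carry out the descent; the paper's root-based argument is the cleaner route.
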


\begin{proof}
We argue by contradiction and assume that $u=f(s,t)\not\equiv 0$ is  homogenous degree $\deg u=N\ge2$ (in $x$) solution to \eqref{plaplace}. Then $m | N$, say $N=mk$ for some integer $k\ge1$ and $f(s,t)$ is itself a homogeneous polynomial in $(s,t)$ of homogeneous degree $k$. Let us denote it by
$$
f(s,t)=\sum_{j=0}^ka_js^{j}t^{k-j}\equiv s^kg(z),
$$
where $g=g(z)$ is a (nontrivial) polynomial in $z=t/s$ of degree $\le k$.
Then $g(z)$ satisfies  \eqref{geq}.  Factorizing  $g(z)=\prod_{i=1}^{r} (z-z_i)^{q_i}$ with $z_i$ pairwise distinct complex numbers, we obtain
$$
h:=\frac{g'}{g}=\sum_{i=1}^{r} \frac{q_i}{z-z_i},
$$
where each $q_i\in \mathbb{Z}^+$ is a positive integer ($=$ the multiplicity of $z_i$). It follows from \eqref{geq} that
\begin{equation}\label{geq1}
\begin{split}
b_1(z^2-1)^2(h'+h^2)h^2&+b_2(z^2-1)(h^2+1)+(b_3z+\nu)(z^2-1)h^3\\ &+b_4(z^2-1)h^2+(b_5z-\nu)h +b_6 =0,
\end{split}
\end{equation}

\textbf{Claim 1}: The set of zeros $\{z_1,\ldots,z_r\}$ does not contain $\pm1$. Indeed, arguing by contradiction, assume, for example, $z_1=1$. Suppose first that $q_1\ge 2$. Then the principal part of the corresponding Laurent decompositions at $z=1$ are
\begin{align*}
(z^2-1)^2(h^2+h')h^2&=\frac{4q_1^3(q_1-1)}{(z-1)^2}+O((z-1)^{-1}),\\
z(z^2-1)h^3&=\frac{2q_1^3}{(z-1)^2}+O((z-1)^{-1}),\\
(z^2-1)(h^2+h')&=\frac{2q_1(q_1-1)}{z-1}+O(1),\\ (z^2-1)h^2&=\frac{2q_1^2}{z-1}+O(1),\\
zh&=\frac{q_1}{z-1}+O(1).
\end{align*}
Combining the found relations with \eqref{geq1} and \eqref{alpha} implies that
$$
q_1^3(4b_1(q_1-1)+2(b_3+\nu))=0
$$
hence by the assumption $p>1$ it follows
$$
q_1=1-\frac{b_3+\nu}{2b_1}=1-\frac{p+m_1-1}{2(p-1)}
<1,
$$
a contradiction follows. Thus $q_1=1$ and $h=\frac{1}{z-1}+O(1)$ at $z=1$. Repeating the above argument  yields the Laurent expansion
\begin{align*}
z(z^2-1)h^3&=\frac{2}{(z-1)^2}+O(\frac{1}{z-1}),
\end{align*}
while the other terms in \eqref{geq1} either regular at $z=1$ or have the order $O(\frac{1}{z-1})$. This yields $b_3+\nu=0$, hence $p=1-m_1\le 0$, a contradiction again.

The same argument holds also true for $z=-1$. Thus, $z_i\ne \pm1.$

\textbf{Claim 2}: $q_i=1$ for all $i$. Indeed, if some $q_i>1$ then arguing as above
we obtain for the Laurent decompositions at $z=z_i$ that
\begin{align*}
(z^2-1)^2(h^2+h')h^2&=\frac{q_i^3(q_i-1)(z_i^2-1)^2}{(z-z_i)^4}+O((z-z_i)^{-3}),
\end{align*}
while other terms in \eqref{geq1} have singularity of order at most $O((z-z_i)^{-3})$. This yields $b_1=1-p=0$, a contradiction follows.

It follows from Claim~1 and 2 that the polynomial $g(z)$ has only simple roots, all distinct from $\pm1$. Equivalently,
\begin{equation}\label{equiv}
(z_i^2-1)g'(z_i)\ne0, \quad 1\le i\le r.
\end{equation}
Let us rewrite \eqref{geq} as a polynomial identity
\begin{equation}\label{PQ}
(z^2-1)g'^2P+gQ=0
\end{equation}
where
\begin{align*}
Q&=b_2(z^2-1)g''g+b_4(z^2-1)g'^2+b_5zg'g +b_6 g^2\\
P&=b_1g''(z^2-1)+(b_3z+\nu)g'.
\end{align*}
Setting $z=z_i$ in \eqref{PQ} and taking into account that $g(z_i)=0$   yields by virtue of \eqref{equiv} that $P(z_i)=0$. Thus, the polynomial $P$ vanishes whenever $g$ does, and also $\deg P\le \deg g$. Since the roots of $g$ are simple, we have  $P(z)=\lambda g(z)$ for some $\lambda\in \R{}$, i.e.
\begin{equation}\label{Pdiff}
b_1(z^2-1)g''+(b_3z+\nu)g'=\lambda g
\end{equation}
Substituting this identity into \eqref{PQ} yields $Q=-\lambda (z^2-1)g'^2$, thus after elimination of $(z^2-1)g''$ from the obtained relation by virtue of (\ref{Pdiff}) yields
$$
(b_6b_1+\lambda b_2) g^2+((b_5b_1-b_2 b_3)z-\nu b_2)) gg'+b_1(\lambda+b_4) (z^2-1)g'^2\equiv 0.
$$
Setting $z=z_i$ in the latter identity yields by virtue of \eqref{equiv}, $b_1\ne0$ and $g(z_i)=0$ that $\lambda=-b_4$, and therefore
$$
(b_6b_1-b_4 b_2) g+((b_5b_1-b_2 b_3)z-b_2\nu) g'\equiv 0.
$$
Since
$$
b_5b_1-b_2 b_3=k^2(2-p)\mu\ne0,
$$
we obtain $g(z)=C_1(z-a)^\beta$, where
$$
a=\frac{b_2\nu}{b_5b_1-b_2 b_3}, \quad \beta=\frac{b_4 b_2-b_6b_1}{b_5b_1-b_2 b_3},
$$
therefore either $g$ is linear (the case treated  in Section~\ref{sublinear}), or it has a single root of  multiplicity $\ge 2$, a contradiction again.
\end{proof}

\section{Biharmonic examples}\label{sec:some}

We finish this paper by a few more curious examples of biharmonic functions based on isoparametric polynomials. Let us consider the isoparametric ansats \eqref{uharm}. Then it follows from \eqref{laplce} that the harmonicity of $f$ is equivalent to the vanishing of the linear operator $C$ in \eqref{notation}, and biharmonic examples are obtained by the first iteration of \eqref{laplce}. A complete analysis of the obtained equation, though much simpler than in the nonlinear case \eqref{geq}, is beyond the scope of this article. We confine ourselves to a particular case
\begin{equation}\label{biharm}
u(x):=(|x|^m+\phi(x))^{\alpha}.
\end{equation}
Interesting that in contrast to the $p$-harmonic case,  the obtained below biharmonic examples involve isoparametric polynomials with  $m_1\ne m_2$.

\begin{proposition}
\label{pro:bihar}
Let $\phi\in \Iso_m(m_1,m_2)$ and $u$ is defined by \eqref{biharm}. Then $\Delta^2 u=0$ but $\Delta u\not\equiv  0$ iff
\begin{itemize}
\item[(i)] $m=2$ and $u$ is the fundamental solution of $\Delta^2$ in $\R{k}$ for some $k\le n$, or
\item[(ii)] $m=4$ and $(m_1,m_2)\in \{(1,4), (2,5), (4,7), (6,9)\}$.
    \end{itemize}
    In the latter case, the corresponding function $u(x)$ is biharmonic everywhere in $\R{n}$ outside  a minimal cone of codimension $m_2+1$.
\end{proposition}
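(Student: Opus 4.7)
The plan is the following. Set $w := |x|^m + \phi(x)$, so that $u = w^\alpha$. From \eqref{Muntzer1}--\eqref{Muntzer2}, Euler's identity $\langle x,\nabla\phi\rangle = m\phi$, and the dimension formula \eqref{obstr}, one obtains the compact identities
\[
|\nabla w|^2 = 2m^2 |x|^{m-2}\, w, \qquad \Delta w = m^2 (m_2+1)|x|^{m-2}.
\]
A direct application of $\Delta w^\alpha = \alpha(\alpha-1)w^{\alpha-2}|\nabla w|^2 + \alpha w^{\alpha-1}\Delta w$ gives
\[
\Delta u = m^2 \alpha (2\alpha + m_2 - 1)\, |x|^{m-2}\, w^{\alpha-1},
\]
so the nonharmonicity $\Delta u \not\equiv 0$ is equivalent to $\alpha \notin \{0,(1-m_2)/2\}$.

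Iterating, I must compute $\Delta v$ for $v = |x|^{m-2} w^{\alpha-1}$. Applying the Leibniz rule $\Delta(fg) = f\Delta g + g\Delta f + 2\nabla f \cdot \nabla g$ with $\Delta |x|^{m-2} = (m-2)(m+n-4)|x|^{m-4}$, the analogue of the previous chain-rule computation $\Delta w^{\alpha-1} = m^2(\alpha-1)(2\alpha+m_2-3)|x|^{m-2}w^{\alpha-2}$, and the cross term $\nabla |x|^{m-2}\cdot \nabla w^{\alpha-1} = m(m-2)(\alpha-1)|x|^{m-4}w^{\alpha-1}$ (which uses $\langle x,\nabla w\rangle = mw$), yields after collecting terms
\[
\Delta^2 u = m^2 \alpha(2\alpha+m_2-1)\, |x|^{m-4}\, w^{\alpha-2}\bigl[(D+E)|x|^m + D\phi\bigr],
\]
where $D := (m-2)\bigl(m+n-4+2m(\alpha-1)\bigr)$ and $E := m^2(\alpha-1)(2\alpha+m_2-3)$. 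Since $|x|^m$ and $\phi(x)$ are $\RR$-linearly independent as smooth functions on $\R{n}$ ($|x|^m$ is $O(n)$-invariant whereas $\phi$ is not), the vanishing of $\Delta^2 u$ is equivalent to the algebraic system $D = E = 0$.

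The equation $E = 0$ has the two roots $\alpha = 1$ and $\alpha = (3-m_2)/2$. If $m = 2$, then $D \equiv 0$ automatically, and by the normal form \eqref{m2} one has $|x|^2 + \phi = 2(x_1^2 + \cdots + x_{m_2+1}^2)$, so $u = 2^\alpha |y|^{2\alpha}$ on $\R{k}$ with $k = m_2+1 \le n$; the two admissible exponents recover exactly the radial biharmonic solutions of $\Delta^2$ in $\R{k}$ (the fundamental solution for generic $k$), which proves (i). If $m \ne 2$, then the choice $\alpha = 1$ reduces $D = 0$ to $(m-2)(m+n-4) = 0$, impossible for $m \ge 3, n \ge 2$; so necessarily $\alpha = (3-m_2)/2$. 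Substituting this value into $D = 0$ and eliminating $n$ by \eqref{obstr} reduces the condition to $m(m_1 - m_2 + 4) = 4$. Running this through the M\"unzner list $m \in \{2,3,4,6\}$ (the trivial linear case $m=1$ is excluded) leaves only $m = 4$ with $m_2 - m_1 = 3$.

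For $m = 4$, the admissible pairs $(m_1, m_2)$ with $m_2 = m_1 + 3$ are obtained from the classification of section~\ref{sec54}: the two homogeneous exceptions $(2,2)$ and $(4,5)$ violate $m_2 - m_1 = 3$, whereas in the FKM family $(m_1, m_2) = (q-1, p-q)$ the constraint becomes $p = 2q + 2$, and the Hurwitz-Radon bound \eqref{qp} then singles out exactly $q \in \{2,3,5,7\}$, yielding the four pairs $(1,4),(2,5),(4,7),(6,9)$ of (ii). For the final assertion, the singular set of $u = w^\alpha$ is $\{w = 0\} = \{\phi = -|x|^m\}$, which by classical isoparametric theory is the cone over the focal submanifold $M_- \subset S^{n-1}$ associated with $\cos m\theta = -1$; this focal variety has codimension $m_2+1$ and the cone over it is minimal. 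The main obstacle is the second differentiation step: massaging $\Delta^2 u$ into the clean form $(D+E)|x|^m + D\phi$ times a generically nonvanishing factor, so that the linear independence of $|x|^m$ and $\phi$ collapses the PDE into the scalar pair $D = E = 0$; the rest is elementary arithmetic together with an appeal to the classification of isoparametric hypersurfaces.
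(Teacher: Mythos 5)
Your argument is correct and is essentially the paper's own proof: the same reduction of $\Delta^2\bigl(|x|^m+\phi\bigr)^\alpha$ via the Cartan--M\"unzner identities and Euler's formula to the two scalar conditions $D=E=0$, followed by the same appeal to the FKM classification and the Hurwitz--Radon bound, and the same identification of the singular set with the cone over the focal variety $M^-$. If anything your version is slightly more careful: your exponents are the correct general-$m$ ones (the paper's displayed formulas carry $m=4$-specific powers of $|x|$), you make the linear independence of $|x|^m$ and $\phi$ explicit, and you retain the factor $\alpha-1$ in $\Delta w^{\alpha-1}$ -- hence the root $\alpha=1$, which the paper's formula drops -- and correctly dispose of it when $m\ne 2$.
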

\begin{remark}
The first case yields the well-known fundamental solutions, while the four examples obtained in (ii) are new, to the best of our knowledge.
Note  that for $m_1=1$ the corresponding solution is algebraic, and for $m_2=2,4,6$ \eqref{biharm} yields three \textit{rational} biharmonic functions. It also follows from the KFM table, see \cite[p.~178]{CecilRyan2015}, that for each pair $(m_1,m_2)\in \{(1,4), (2,5), (6,9)\}$ there is  essentially unique (up to an isometry of the ambient space) isoparametric quartic. In the exceptional case $(m_1,m_2)=(4,7)$ there exist two different isoparametric quartics.
\end{remark}
\begin{proof}
Let $v=|x|^m+\varphi$, such that $u=v^\alpha$. Then using \eqref{obstr}, \eqref{Muntzer1} and \eqref{Muntzer2}  we find
$$
|\nabla v|^2=m^2|x|^6+2m|x|^2\scal{x}{\nabla \phi(x)}+|\nabla \phi|^2=2m^2v|x|^2
$$
and
$$
\Delta v=\bigl(m(n+m-2)+\half{m^2}{2}(m_2-m_1)\bigr)|x|^2=m^2(m_2+1)|x|^2,
$$
therefore
$$
\Delta v^\alpha=\alpha v^{\alpha-1}\Delta v+\alpha(\alpha-1)|\nabla v|^2v^{\alpha-2}=\gamma_\alpha v^{\alpha-1}|x|^2,
$$
where $\gamma_\alpha=\alpha m^2(m_2+2\alpha-1)$. By the assumption $\Delta v^\alpha\not\equiv 0$, hence $\gamma_\alpha\ne0$. Then iterating the obtain relation we find
$$
\frac{1}{\gamma_\alpha}\Delta^2 v^\alpha=\Delta |x|^2v^{\alpha-1}=\biggl((m-2)(n-4+m(2\alpha-1))v +\gamma_{\alpha-1}|x|^4\biggr)v^{\alpha-2}.
$$
The latter expression identically vanishes if and only if
\begin{align}\label{bihh1}
(m-2)(n-4+m(2\alpha-1))&=0 \\
\gamma_{\alpha-1}=\alpha m^2(m_2+2\alpha-3)&=0.\label{bihh2}
\end{align}

First assume that $m=3$ or $6$. Then  $m_1=m_2$ (see Section~\ref{sec54}), therefore \eqref{bihh2} yields  $m_2=3-2\alpha$, hence $n=mm_2+2=m(3-2\alpha)+2$. On the other hand, \eqref{bihh1} gives $n=4+m(1-2\alpha)$. Eliminating $n$ yields $m=1$, a contradiction.

Next consider $m=2$. Then \eqref{bihh1} is fulfilled automatically, and \eqref{bihh2} gives $m_2=3-2\alpha$. It follows from \eqref{m2} that $\phi=|\xi|^2-|\eta|^2$, where $x=(\xi,\eta)\in \R{m_2+1}\times \R{m_1+1}$. But in that case, $u=(|x|^2+\phi)^{(3-m_2)/2}=C|\eta|^{3-m_2}$ is a function of $\eta\in \R{m_2+1}$ and it is the fundamental solution of $\Delta^2$ in $\R{m_2+1}$. This yields (i).

Finally, suppose $m=4$. Arguing similarly, we find $m_2=3+m_1$ and $\alpha=-m_1/2$. In particular, $(m_1,m_2)$ is distinct from the two exceptional pairs $(2,2)$, $(4,5)$ and $(7,8)$, thus the corresponding isoparametric quartic is of KFM type, i.e. it is congruent to \eqref{FKM}, see \cite{ChiBook}. Therefore, combining the obtained relation  with \eqref{m1m2m} yields $p=2q+2$, where the possible values $(p,q)$ are determined from the Hurwitz-Radon obstruction \eqref{qp}, i.e.
$$
q\le 1+\rho(2q+2).
$$
Since the Hurwitz-Radon function $\rho$ has the logarithmic growth, the latter inequality has only finitely many solutions. A simple analysis shows that the only possible solutions are $q\in\{1,2,3,5,7\}$. By \eqref{m1m2m}, $m_1=q-1$. If $m_1=0$ then $\alpha=0$, hence $u=const$. If $q\ge 2$ then $m_1\ge1$, thus we arrive at the four possible pairs of isoparametric parameters:
$$
(m_1,m_2)\in \{(1,4), \,(2,5), \,(4,7), \,(6,9)\},
$$
all realizable, see for example Table in section~4.3 in \cite{FKM} or \cite[p.~178]{CecilRyan2015}.

Next, since $\alpha<0$, the function $u(x)$ is well-defined everywhere in $\R{n}\setminus CM^-$, where $CM^-$ is the cone over
$$
M^-=\{x\in S^{n-1}:u(x)=0\}=\{x\in S^{n-1}:\phi(x)=-1\}.
$$
Then the desired claim follows from the fact that the focal varieties  $M^\pm:=\{x\in S^{n-1}:\phi(x)=\pm1\}$ are minimal submanifolds of the unit sphere $S^{n-1}$ of codimension $m_1+1$ for $M^+$ and $m_2+1$ for $M^-$, respectively, see \cite{Nomi}.

\end{proof}

\begin{example}[$(m_1,m_2)=(1,4)$]
Then $q=2$,   $p=2q+2=6$ and $n=2p=12$. The corresponding Clifford symmetric system in $\R{12}$ is given by
$$
A_1=\left(
        \begin{array}{cc}
         1_6 & 0 \\
          0 & -1_6\\
        \end{array}
      \right),
\quad
A_2=\left(
        \begin{array}{cc}
         0 & 1_6 \\
          1_6 & 0\\
        \end{array}
      \right),
$$
where $1_d$ is the $d\times d$-unit matrix. It follows from \eqref{FKM} and \eqref{biharm} that
$$
u=\frac{1}{2\sqrt{|\xi|^2|\eta|^2-\scal{\xi}{\eta}^2}}, \quad \xi=(x_1,\ldots,x_6),\quad \eta=(x_7,\ldots,x_{12})
$$
The obtained biharmonic function is well-defined is degree $-2$ homogeneous and well-defined everywhere in $\R{12}\setminus CM^-$, where the singular set $CM^-$ is the  $7$-dimensional minimal cone
$$
\Gamma=\{(x\cos \theta ,x\sin \theta )\in \R{12}: x\in \R{6} \quad \text{and}\quad \theta\in [0,2\pi]\}.
$$
\end{example}

\begin{example}[$(m_1,m_2)=(2,5)$] In this case, $(q,p)=(3,8)$ and $n=16$. Then any symmetric Clifford system with $(q,p)=(3,8)$ is geometric equivalent to the triple
$$
A_1=\left(
        \begin{array}{cccc}
         1_4 & 0 & 0 &0\\
         0 & -1_4 & 0 &0\\
         0 & 0 & 1_4 &0\\
         0 & 0 & 0 &-1_4
        \end{array}
      \right),
\quad
A_2=\left(
        \begin{array}{cccc}
         0 & 1_4 & 0 &0\\
         1_4 & 0 & 0 &0\\
         0 & 0 & 0 & 1_4\\
         0 & 0 & 1_4 & 0
        \end{array}
      \right),
\quad A_3=\left(
        \begin{array}{cccc}
         0 & 0 & 0 &1_4\\
         0 & 0 & 1_4 &0\\
         0 & -1_4 & ´0 &0\\
         1_4 & 0 & 0 &0
        \end{array}
      \right),
$$
Then the corresponding biharmonic function is rational:
$$
u=\frac{1}{(|\xi_1|^2+|\xi_3|^2)(|\xi_2|^2+|\xi_4|^2)- (\scal{\xi_1}{\xi_2}+ \scal{\xi_3}{\xi_4})^2- (\scal{\xi_1}{\xi_4}- \scal{\xi_2}{\xi_3})^2},
$$
where $\xi=(x_{i+1},\ldots,x_{i+4})$, $i=1,2,3,4.$
The function $u$ is biharmonic outside  the minimal cone $CM^-\subset\R{16}$ of codimension $m_2+1=6$.

\end{example}


\bibliographystyle{plain}

\def\cprime{$'$}


\begin{thebibliography}{10}

\bibitem{Abresh}
U.~Abresch.
\newblock Isoparametric hypersurfaces with four or six distinct principal
  curvatures. {N}ecessary conditions on the multiplicities.
\newblock {\em Math. Ann.}, 264(3):283--302, 1983.

\bibitem{Aronsson86}
Gunnar Aronsson.
\newblock Construction of singular solutions to the {$p$}-harmonic equation and
  its limit equation for {$p=\infty$}.
\newblock {\em Manuscripta Math.}, 56(2):135--158, 1986.

\bibitem{Aronsson89}
Gunnar Aronsson.
\newblock On certain {$p$}-harmonic functions in the plane.
\newblock {\em Manuscripta Math.}, 61(1):79--101, 1988.

\bibitem{BGG}
E.~Bombieri, E.~De~Giorgi, and E.~Giusti.
\newblock Minimal cones and the {B}ernstein problem.
\newblock {\em Invent. Math.}, 7:243--268, 1969.

\bibitem{BorVer07}
Rouba Borghol and Laurent V\'eron.
\newblock Boundary singularities of {$N$}-harmonic functions.
\newblock {\em Comm. Partial Differential Equations}, 32(4-6):1001--1015, 2007.

\bibitem{Cartan38}
{\'E}.~Cartan.
\newblock Familles de surfaces isoparam\'etriques dans les espaces \`a courbure
  constante.
\newblock {\em Ann. Mat. Pura Appl.}, 17(1):177--191, 1938.

\bibitem{Cartan40}
{\'E}.~Cartan.
\newblock Sur des familles d'hypersurfaces isoparam\'etriques des espaces
  sph\'eriques \`a 5 et \`a 9 dimensions.
\newblock {\em Univ. Nac. Tucum\'an. Revista A.}, 1:5--22, 1940.

\bibitem{CCC}
Th.~E. Cecil, Q.-S. Chi, and G.R. Jensen.
\newblock Isoparametric hypersurfaces with four principal curvatures.
\newblock {\em Ann. of Math. (2)}, 166(1):1--76, 2007.

\bibitem{CecilRyan2015}
Th.~E. Cecil and P.~J. Ryan.
\newblock {\em Geometry of hypersurfaces}.
\newblock Springer Monographs in Mathematics. Springer, New York, 2015.

\bibitem{ChiBook}
Quo-Shin Chi.
\newblock Ideal theory and classification of isoparametric hypersurfaces.
\newblock In {\em Geometry and topology of submanifolds and currents}, volume
  646 of {\em Contemp. Math.}, pages 81--104. Amer. Math. Soc., Providence, RI,
  2015.

\bibitem{DorfN}
J.~Dorfmeister and E.~Neher.
\newblock Isoparametric triple systems of algebra type.
\newblock {\em Osaka J. Math.}, 20(1):145--175, 1983.

\bibitem{Evans82}
Lawrence~C. Evans.
\newblock A new proof of local {$C^{1,\alpha }$} regularity for solutions of
  certain degenerate elliptic p.d.e.
\newblock {\em J. Differential Equations}, 45(3):356--373, 1982.

\bibitem{Ferap}
E.~V. Ferapontov.
\newblock Isoparametric hypersurfaces in spheres, integrable nondiagonalizable
  systems of hydrodynamic type, and {$N$}-wave systems.
\newblock {\em Differential Geom. Appl.}, 5(4):335--369, 1995.

\bibitem{FKM}
D.~Ferus, H.~Karcher, and H.~F. M{\"u}nzner.
\newblock Cliffordalgebren und neue isoparametrische {H}yperfl\"achen.
\newblock {\em Math. Z.}, 177(4):479--502, 1981.

\bibitem{GeXie}
J.~Ge and Y.~Xie.
\newblock Gradient map of isoparametric polynomial and its application to
  {G}inzburg-{L}andau system.
\newblock {\em J. Funct. Anal.}, 258(5):1682--1691, 2010.

\bibitem{GeJ2016}
Jianquan Ge.
\newblock Isoparametric foliations, diffeomorphism groups and exotic smooth
  structures.
\newblock {\em Adv. Math.}, 302:851--868, 2016.

\bibitem{Karcher86}
H.~Karcher.
\newblock A geometric classification of positively curved symmetric spaces and
  the isoparametric construction of the {C}ayley plane.
\newblock {\em Ast\'erisque}, (163-164):6, 111--135, 282 (1989), 1988.
\newblock On the geometry of differentiable manifolds (Rome, 1986).

\bibitem{KSVeron}
Satyanad Kichenassamy and Laurent V{\'e}ron.
\newblock Singular solutions of the {$p$}-{L}aplace equation.
\newblock {\em Math. Ann.}, 275(4):599--615, 1986.

\bibitem{Krol73}
I.~N. Krol$'$.
\newblock The behavior of the solutions of a certain quasilinear equation near
  zero cusps of the boundary.
\newblock {\em Trudy Mat. Inst. Steklov.}, 125:140--146, 233, 1973.
\newblock Boundary value problems of mathematical physics, 8.

\bibitem{KrolMaz72}
I.~N. Krol$'$ and V.~G. Maz$'$ya.
\newblock The absence of the continuity and {H}\"older continuity of the
  solutions of quasilinear elliptic equations near a nonregular boundary.
\newblock {\em Trudy Moskov. Mat. Ob\v s\v c.}, 26:75--94, 1972.

\bibitem{LevCivita}
T.~Levi-Civita.
\newblock Famiglie di superficie isoparametrische nell’ordinario spacio
  euclideo.
\newblock {\em Atti. Accad. naz. Lincei. Rend. Cl. Sci. Fis. Mat. Natur},
  26:350--362, 1937.

\bibitem{Lewis77}
John~L. Lewis.
\newblock Capacitary functions in convex rings.
\newblock {\em Arch. Rational Mech. Anal.}, 66(3):201--224, 1977.

\bibitem{Lewis80}
John~L. Lewis.
\newblock Smoothness of certain degenerate elliptic equations.
\newblock {\em Proc. Amer. Math. Soc.}, 80(2):259--265, 1980.

\bibitem{Lewis2016}
John~L. Lewis and A.~Vogel.
\newblock On p laplace polynomial solutions.
\newblock {\em The Journal of Analysis}, pages 1--24, 2016.

\bibitem{Lind06}
Peter Lindqvist.
\newblock {\em Notes on the {$p$}-{L}aplace equation}, volume 102 of {\em
  Report. University of Jyv\"askyl\"a Department of Mathematics and
  Statistics}.
\newblock University of Jyv\"askyl\"a, Jyv\"askyl\"a, 2006.

\bibitem{Lind16a}
Peter Lindqvist.
\newblock {\em Notes on the infinity {L}aplace equation}.
\newblock SpringerBriefs in Mathematics. BCAM Basque Center for Applied
  Mathematics, Bilbao; Springer, [Cham], 2016.

\bibitem{Manfredi}
Juan~J. Manfredi.
\newblock Isolated singularities of {$p$}-harmonic functions in the plane.
\newblock {\em SIAM J. Math. Anal.}, 22(2):424--439, 1991.

\bibitem{Miyaoka13}
R.~Miyaoka.
\newblock Isoparametric hypersurfaces with {$(g,m)=(6,2)$}.
\newblock {\em Ann. of Math. (2)}, 177(1):53--110, 2013.

\bibitem{Mun1}
H.~F. M{\"u}nzner.
\newblock Isoparametrische {H}yperfl\"achen in {S}ph\"aren.
\newblock {\em Math. Ann.}, 251(1):57--71, 1980.

\bibitem{NTV}
N.~Nadirashvili, V.G. Tkachev, and S.~Vl{\u{a}}du{\c{t}}.
\newblock A non-classical solution to a {H}essian equation from {C}artan
  isoparametric cubic.
\newblock {\em Adv. Math.}, 231(3-4):1589--1597, 2012.

\bibitem{NTVbook}
N.~Nadirashvili, V.G. Tkachev, and S.~Vl{\u{a}}du{\c{t}}.
\newblock {\em Nonlinear elliptic equations and nonassociative algebras},
  volume 200 of {\em Mathematical Surveys and Monographs}.
\newblock American Mathematical Society, Providence, RI, 2014.

\bibitem{Nomi}
K.~Nomizu.
\newblock Some results in {E}. {C}artan's theory of isoparametric families of
  hypersurfaces.
\newblock {\em Bull. Amer. Math. Soc.}, 79:1184--1188, 1973.

\bibitem{OT1}
H.~Ozeki and M.~Takeuchi.
\newblock On some types of isoparametric hypersurfaces in spheres. {I}.
\newblock {\em T\^ohoku Math. J. (2)}, 27(4):515--559, 1975.

\bibitem{OT2}
Hideki Ozeki and Masaru Takeuchi.
\newblock On some types of isoparametric hypersurfaces in spheres. {II}.
\newblock {\em T\^ohoku Math. J. (2)}, 28(1):7--55, 1976.

\bibitem{PengHou}
Chia-Kuei Peng and Zi~Xin Hou.
\newblock A remark on the isoparametric polynomials of degree {$6$}.
\newblock In {\em Differential geometry and topology ({T}ianjin, 1986--87)},
  volume 1369 of {\em Lecture Notes in Math.}, pages 222--224. Springer,
  Berlin, 1989.

\bibitem{Tang3}
J.~Peng and Z.~Tang.
\newblock Brouwer degrees of gradient maps of isoparametric functions.
\newblock {\em Sci. China Ser. A}, 39(11):1131--1139, 1996.

\bibitem{Porr}
Alessio Porretta and Laurent V\'eron.
\newblock Separable {$p$}-harmonic functions in a cone and related quasilinear
  equations on manifolds.
\newblock {\em J. Eur. Math. Soc. (JEMS)}, 11(6):1285--1305, 2009.

\bibitem{Savo}
Alessandro Savo.
\newblock Heat flow, heat content and the isoparametric property.
\newblock {\em Math. Ann.}, 366(3-4):1089--1136, 2016.

\bibitem{Segre}
B.~Segre.
\newblock Famiglie di ipersuperficie isoparametrische negli spazi euclidei ad
  un qualunque numero di demesioni.
\newblock {\em Atti. Accad. naz Lincie Rend. Cl. Sci. Fis. Mat. Natur.},
  21:203–207, 1938.

\bibitem{Serrin65}
James Serrin.
\newblock Isolated singularities of solutions of quasi-linear equations.
\newblock {\em Acta Math.}, 113:219--240, 1965.

\bibitem{Shapiro}
D.~B. Shapiro.
\newblock {\em Compositions of quadratic forms}, volume~33 of {\em de Gruyter
  Expositions in Mathematics}.
\newblock Walter de Gruyter \& Co., Berlin, 2000.

\bibitem{Siffert3}
Anna Siffert.
\newblock Classification of isoparametric hypersurfaces in spheres with
  {$(g,m)=(6,1)$}.
\newblock {\em Proc. Amer. Math. Soc.}, 144(5):2217--2230, 2016.

\bibitem{Simon89}
L.~Simon.
\newblock Entire solutions of the minimal surface equation.
\newblock {\em J. Differential Geom.}, 30(3):643--688, 1989.

\bibitem{SS}
L.~Simon and B.~Solomon.
\newblock Minimal hypersurfaces asymptotic to quadratic cones in {${\bf
  R}^{n+1}$}.
\newblock {\em Invent. Math.}, 86(3):535--551, 1986.

\bibitem{BS2}
B.~Solomon.
\newblock The harmonic analysis of cubic isoparametric minimal hypersurfaces.
  {II}. {D}imensions {$12$} and {$24$}.
\newblock {\em Amer. J. Math.}, 112(2):205--241, 1990.

\bibitem{Stoltz}
S.~Stolz.
\newblock Multiplicities of {D}upin hypersurfaces.
\newblock {\em Invent. Math.}, 138(2):253--279, 1999.

\bibitem{Takagi}
Ryoichi Takagi.
\newblock A class of hypersurfaces with constant principal curvatures in a
  sphere.
\newblock {\em J. Differential Geometry}, 11(2):225--233, 1976.

\bibitem{Tang2}
Z.~Tang.
\newblock New constructions of eigenmaps between spheres.
\newblock {\em Internat. J. Math.}, 12(3):277--288, 2001.

\bibitem{TangYan13}
Zizhou Tang and Wenjiao Yan.
\newblock Isoparametric foliation and {Y}au conjecture on the first eigenvalue.
\newblock {\em J. Differential Geom.}, 94(3):521--540, 2013.

\bibitem{Tk06c}
V.G. Tkachev.
\newblock Algebraic structure of quasiradial solutions to the
  {$\gamma$}-harmonic equation.
\newblock {\em Pacific J. Math.}, 226(1):179--200, 2006.

\bibitem{Tk10a}
V.G. Tkachev.
\newblock A generalization of {C}artan's theorem on isoparametric cubics.
\newblock {\em Proc. Amer. Math. Soc.}, 138(8):2889--2895, 2010.

\bibitem{Tk10c}
V.G. Tkachev.
\newblock Minimal cubic cones via {C}lifford algebras.
\newblock {\em Complex Anal. Oper. Theory}, 4(3):685--700, 2010.

\bibitem{Tk14}
V.G. Tkachev.
\newblock A {J}ordan algebra approach to the cubic eiconal equation.
\newblock {\em J. of Algebra}, 419:34--51, 2014.

\bibitem{Tk14b}
V.G. Tkachev.
\newblock A note on isoparametric polynomials.
\newblock {\em Anal. Math. Phys.}, 4(3):237--245, 2014.

\bibitem{Tk16pLapl}
V.G. Tkachev.
\newblock On the non-vanishing property for real analytic solutions of the
  $p$-laplace equation.
\newblock {\em Proc. Amer. Math. Soc.}, 144, 2016.

\bibitem{Uhlenbeck}
K.~Uhlenbeck.
\newblock Regularity for a class of non-linear elliptic systems.
\newblock {\em Acta Math.}, 138(3-4):219--240, 1977.

\bibitem{Ural68}
N.~N. Ural$'$ceva.
\newblock Degenerate quasilinear elliptic systems.
\newblock {\em Zap. Nau\v cn. Sem. Leningrad. Otdel. Mat. Inst. Steklov.
  (LOMI)}, 7:184--222, 1968.

\bibitem{Veron88}
Laurent V\'eron.
\newblock Singularities of some quasilinear equations.
\newblock In {\em Nonlinear diffusion equations and their equilibrium states,
  {II} ({B}erkeley, {CA}, 1986)}, volume~13 of {\em Math. Sci. Res. Inst.
  Publ.}, pages 333--365. Springer, New York, 1988.

\bibitem{Veron}
Laurent V{\'e}ron.
\newblock {\em Singularities of solutions of second order quasilinear
  equations}, volume 353 of {\em Pitman Research Notes in Mathematics Series}.
\newblock Longman, Harlow, 1996.

\bibitem{Xie}
Yu~Quan Xie.
\newblock Willmore submanifolds in the unit sphere via isoparametric functions.
\newblock {\em Acta Math. Sin. (Engl. Ser.)}, 31(12):1963--1969, 2015.

\end{thebibliography}

\end{document}